\documentclass[11pt,reqno]{amsart}
\usepackage{amssymb,mathrsfs,graphicx,subfigure, enumerate}
\usepackage{amsmath,amsfonts,amssymb,amscd,amsthm,bbm}
\usepackage{graphicx,colortbl,here}
\usepackage{extpfeil}
\usepackage{kotex}
\numberwithin{figure}{section}

\usepackage[utf8]{inputenc}
\usepackage[T1]{fontenc}

\def\d {{\partial}}

\def \rpsi_i {|\psi_i \rangle}
\def \lpsi_i {\langle \psi_i|}
\def \lrpsi_i{\langle \psi_i | \psi_i \rangle}
\def \rpsi_k {|\psi_k \rangle}
\def \lpsi_k {\langle \psi_k|}
\def \lrpsi_k{\langle \psi_k | \psi_k \rangle}

\DeclareMathOperator*{\esssup}{ess\,sup}
\DeclareMathOperator*{\Tr}{Tr}

\newcommand{\bbr}{\mathbb R}

\newcommand{\bbs}{\mathbb S}
\newcommand{\bbz}{\mathbb Z}
\newcommand{\bbe}{\mathbb E}

\newcommand{\bbp} {\mathbb P}

\newcommand{\p}{\partial}
\newcommand{\veps}{\varepsilon}

\newcommand{\ba}{\begin{aligned}}
\newcommand{\ea}{\end{aligned}}

\newcommand{\be}{\begin{equation}}
\newcommand{\ee}{\end{equation}}

\renewcommand{\d}{  {\textup{d}} }

\newcommand{\bbsd}{        {      \bbs^{d-1}  }    }
\newcommand{\bbn}{\mathbb{N}}

\newtheorem{theorem}{Theorem}[section]
\newtheorem{lemma}{Lemma}[section]

\newtheorem{proposition}{Proposition}[section]
\newtheorem{remark}{Remark}[section]

\newcommand{\lt}{\left}
\newcommand{\rt}{\right}
\newcommand{\bq}{\begin{equation}}
\newcommand{\eq}{\end{equation}}
\newcommand{\pa}{\partial}

\allowdisplaybreaks

\begin{document}

\title[Mean-field optimal control of the interacting particles on the sphere]{Optimal control of diffusive mean-field models for swarming particles on the sphere}

\author[J. Jung]{Jinwook Jung}
\address[J. Jung]{\newline Department of Mathematics and Research Institute for Natural Sciences \newline Hanyang University, 222 Wangshimni-Ro, 04763 Seoul, Republic of Korea}
\email{jinwookjung@hanyang.ac.kr}

\author[D. Kim]{Dohyun Kim}
\address[D. Kim]{\newline Department of Mathematics Education and Institute of Pure and Applied Mathematics, \newline Sungkyunkwan University, 25-2 Sungkyunkwan-Ro, 03063 Seoul, Republic of Korea}
\email{dohyunkim@skku.edu}

\thanks{\textbf{Acknowledgment.}
The work of D. Kim was supported by the National Research Foundation of Korea (NRF) grant funded by the Korean government (MSIT) (RS-2024-00454452).}

%\begin{abstract}
%We study a mean-field optimal control problem for a consensus (high-dimensional Kuramoto-type) dynamics on the unit sphere. At the macroscopic level, the agent density becomes a solution to a nonlinear Fokker--Planck equation on the unit sphere with a nonlocal alignment drift and diffusion. The control acts through a matrix-valued drift $\Psi(t,\omega)$ and a scalar gain $\kappa(t,\omega)$ multiplying the alignment operator. We consider a tracking-type cost functional that measures the $L^2$-distance between $f$ and a prescribed target density $z$, together with an $L^2$-penalization of the control effort.
%
%We first formulate the microscopic controlled interacting diffusion and its associated Liouville equation, and prove the existence of optimal controls for the finite-dimensional (Liouville) problem under Sobolev-type admissibility constraints. Next, for fixed control functions, we establish a quantitative stochastic mean-field limit: the first marginal of the $N$-particle Liouville solution converges to the mean-field solution in $L^\infty(0,T;L^1(\mathbb S^{d-1}))$ with rate $O(N^{-1/2})$, using a relative entropy method. Finally, we show that any weak accumulation point of microscopic optimal controls yields an optimal control for the mean-field problem, and the minimal costs converge as $N\to\infty$.
%\end{abstract}

\begin{abstract}
We study a mean-field optimal control problem for a consensus (high-dimensional Kuramoto-type) dynamics with diffusion on the unit sphere. The control acts through a prescribed drift field and an interaction gain, and the cost functional is given to track a given target density while penalizing the control effort. At the microscopic level, we formulate the corresponding controlled $N$-particle Liouville problem and establish the existence of optimal controls. For fixed controls, we obtain a quantitative stochastic mean-field limit showing that the one-particle marginal converges to the mean-field solution  with the convergence rate  $\mathcal O(1/\sqrt{N})$. Finally, we show that microscopic optimal controls approximate a mean-field optimal control: any weak limit of particle-level minimizers is optimal for the mean-field problem.
\end{abstract}

\keywords{Consensus model, mean-field control, optimal control, relative entropy method, stochastic mean-field limit}

\makeatletter
\@namedef{subjclassname@2020}{%
  \textup{2020} Mathematics Subject Classification}
\makeatother

\subjclass[2020]{35Q93, 35Q92, 35K55, 34C40}  

\date{\today}

\maketitle

%\tableofcontents

\section{Introduction}

%Collective alignment and synchronization phenomena arise in many multi-agent systems, ranging from swarming and flocking in biology to coordinated motion, attitude alignment, and consensus in engineered networks. When the individual states are constrained to lie on a manifold, geometric interaction laws become essential. A prototypical example is a consensus (high-dimensional Kuramoto-type) model on the unit sphere $\mathbb S^{d-1}$, in which each agent adjusts its state toward others through the tangential projection onto the sphere. Such models admit kinetic (mean-field) descriptions and display rich emergent behaviors, including alignment, phase transitions, and noise-induced effects.
%
%\vspace{2cm}

Collective behaviors of multi-agent systems such as swarming or opinion formation  have been extensively studied due to their relevance in biology, physics, and engineering, etc. \cite{L07, O06, V95}. When the number of interacting agents $N$ is sufficiently large, the dynamics of the group is commonly approximated by mean-field equations describing the time evolution of the density of agents  \cite{BCC12, HKLN19, S91}. For such systems or mean-field equations, the emergence of desired collective motions or patterns depends sensitively on the choice of initial data, interaction kernels and system parameters. This naturally motivates the incorporation of external interventions in the form of control functions; see \cite{CFPT13, CKPP19, G03}  and references therein for studies on construction of control functions which can steer the system toward prescribed macroscopic behaviors. 

Here, we take two aspects of the control problem into account. First, from an application perspective, external controls are typically subject to an optimality problem with respect to suitable cost functionals. Accordingly, one seeks controls that steer the agents toward desired collective states while minimizing a given cost functional.  Second, while the governing equations for dynamics of agents are mostly formulated on Euclidean space, the agents are subject to intrinsic geometric constraints, and their states evolve naturally on manifolds. 

Motivated by these considerations, we investigate the optimal control problem for multi-agent systems on some manifolds, both at microscopic and macroscopic (i.e., mean-field) scales. Although there is   substantial literature on emergent behavior of multi-agent systems on manifolds and their mean-field descriptions \cite{CLP15,  CH14, Lo09, ZZQ18, Zhu13},  optimal control problems for dynamical systems on manifolds have received comparatively little attention.

In this work, we address an optimal control problem for a consensus model on the unit sphere $\mathbb S^{d-1}$ originally introduced by  \cite{Lo09, O06}. 
We investigate the problem at both microscopic and macroscopic scales, and rigorously connect the two via a stochastic mean-field limit. Our starting point is a controlled mean-field PDE for the density $f(t,\omega)$:
\begin{equation}\label{A-0}
\partial_t f + \nabla_\omega \cdot \bigl(\Psi  \omega  f + \kappa   L[f]  f \bigr)
= \sigma \Delta_\omega f, \qquad \omega\in\mathbb S^{d-1},
\end{equation}
Here, $f=f(t,\omega)$ is the density in $\bbsd$ at time $t$, $\nabla_\omega$, $\nabla_\omega \cdot$ and $\Delta_\omega$ are the gradient, divergence and Laplace-Beltrami operator on $\bbsd$, respectively, the integration is the normalized surface measure $d\omega$ of $\bbsd$ so that $\int_{\bbsd} d\omega =1$ and $\sigma\geq0$ is a diffusion coefficient. Lastly, the nonlocal alignment operator is defined as 
\[
L[f](t,\omega) :=\int_{\mathbb S^{d-1}} \mathbb{P}(\omega)\omega_* f(t,\omega^*)d\omega_*,\quad  \mathbb{P}(\omega)\omega_* := \omega_* - \langle \omega,\omega_*\rangle \omega
\]
where $\mathbb{P}$ is the projection operator on the tangent space of $\bbsd$. For the two control functions, $\Psi=\Psi(t,\cdot)$ is a matrix-valued drift realized as a skew-symmetric matrix and $\kappa=\kappa(t,\cdot)$ is a scalar gain scaling the alignment interaction.

In what follows, we introduce our cost functional.  Given a target density $z(t,\omega)$, we consider a tracking-type cost functional of the form
\begin{align} \label{A-1}
\begin{aligned}
\mathcal J(f[\Psi,\kappa],\Psi,\kappa) &:=\frac{\alpha}{2}\int_0^T  \int_{\mathbb S^{d-1}}|f(t,\omega)-z(t,\omega)|^2 d\omega dt \\
&\quad+\frac{\beta}{2}\int_{\mathbb S^{d-1}}|f(T,\omega)-z(T,\omega)|^2 d\omega \\
&\quad+\frac12\int_0^T  \int_{\mathbb S^{d-1}}\bigl(\lambda_1|\Psi(t,\omega) |^2+\lambda_2|\kappa(t,\omega)|^2\bigr)d\omega dt,
\end{aligned}
\end{align}
where $\alpha,\beta,\lambda_1$, and $\lambda_2$ are non-negative constants and $f=f(t,\omega)$ solves \eqref{A-0} with given initial data $f_0$. Here, the cost functional $\mathcal J$ consists of three terms: (i) running tracking term, (ii) terminal tracking term and (iii) control regularization. Based on $L^2$-metric,  the first term penalizes the time-accumulated discrepancy between the evolving state density $f(t,\cdot)$ and the prescribed target density $z(t,\cdot)$ over the entire control horizon $[0,T]$. For the second term, it imposes a penalty on the final-time error between $f(T,\cdot)$ and $z(T,\cdot)$. Thus, it enforces endpoint accuracy ensuring that the controlled density reaches (or remains close to) the desired terminal configuration at time $t=T$. Lastly, the third term penalizes the magnitude (or energy) of the control inputs over the space and time. Each term regularizes the matrix-valued control field $\Psi$ and the scalar control $\kappa$, respectively. Such $L^2$-penalization prevents excessively large controls and promotes smooth and physically plausible actuation. Overall, we would say that $\mathcal J$ balances (i) trajectory-level tracking of the target density, (ii) terminal accuracy at the final time and (iii) regularized control expenditure.

 For this cost functional, our goal of the optimal control problem is to seek a minimizer $(\bar \Psi,\bar \kappa)$ over an admissible class of controls:
\[
\mathcal J (f[\bar \Psi,\kappa],\bar \Psi,\bar \kappa) = \min_{\Psi,\kappa\in \mathcal U} \mathcal J (f[\Psi,\kappa], \Psi,  \kappa).
\] 
Here, $\mathcal U$ is the set of admissible controls defined as
\begin{equation}  \label{A-1-2}
\mathcal U := \{ u \in L^\infty(0,T; W^{1,q}(\bbsd)) : \|u\|_{L^\infty(0,T; W^{1,q}(\bbsd))} \leq M\}.
\end{equation}
We abuse the notation to write $\Psi \in \mathcal U$ if all the entries $(\Psi)_{ij} \in \mathcal U$.

To analyze the optimal control problem for the mean-field PDE \eqref{A-0}, it is natural to consider an underlying microscopic description in terms of a controlled interacting diffusion on the sphere. Specifically, we introduce an $N$-particle Stratonovich SDE where each agent evolves on $\bbsd$ under a controlled matrix-valued drift $\Psi(t,\cdot)$ and a controlled alignment interaction scaled by $\kappa(t,\cdot)$, together with isotropic spherical noise of intensity $\sigma$: 
\begin{equation}\label{A-2}
d x_i = \Psi(t,x_i)  x_i dt +   \frac{\kappa(t,x_i)}{N}\sum_{k=1}^N \mathbb{P}(x_i) x_k dt + \sqrt{2\sigma} \mathbb{P}(x_i)\circ dW_i,\quad i\in [N],
\end{equation}
where $\{W_i\}_{i=1}^N$ are independent standard  Brownian motions in $\bbr^d$. The noise term $\mathbb{P}(x_i)\circ dW_i$ is understood in the Stratonovich sense  which guarantees the positive invariance of the unit sphere $\bbsd$ with respect to the microscopic flow \eqref{A-2}. In the mean-field limit $N\to\infty$, the empirical measure associated with \eqref{A-2} is expected to converge to the (unique) solution of the mean-field PDE \eqref{A-0}. This stochastic mean-field limit provides the rigorous link between the microscopic controlled dynamics and the macroscopic PDE, thereby justifying the use of \eqref{A-2} as a consistent finite-dimensional approximation for the optimal control problem for \eqref{A-0}.

To make the connection between \eqref{A-2} and \eqref{A-0}, we pass from the stochastic interacting particle system to its equivalent formulation, called Liouville equation that governs the time evolution for the joint law $f^N(t,\cdot)$ of $\{x_i(t)\}_{i=1}^N$ on $(\bbsd)^N$:
\begin{equation} \label{A-3}
\pa_t f^N + \sum_{i=1}^N \nabla_{\omega_i} \cdot\lt(\lt(\Psi(t,\omega_i) \omega_i+ \frac{\kappa(t,\omega_i)}{N}\sum_{k=1}^N \bbp(\omega_i)\omega_k \rt)f^N \rt) = \sigma\sum_{i=1}^N \Delta_{\omega_i} f^N
\end{equation}
where the initial data is given as
\[
f^N\Big|_{t=0} = f_0^{\otimes N} := \prod_{i=1}^N f_0(x_i).
\] 
This reformulation replaces the stochastic dynamics by a deterministic PDE which is particularly convenient in the optimal control problem, since the existence of minimizers and compactness of minimizing sequences can be established through standard PDE arguments. Moreover, the mean-field limit is naturally represented in terms of marginals for the Liouville equation, allowing one to apply relative entropy estimates to obtain quantitative propagation of chaos bounds and to justify passing to the limit along optimal controls.

For the cost functional of the Liouville equation \eqref{A-3}, we need to define the first marginal of $f^N$, denoted by $f^{N;1}[\Psi,\kappa]$ 
\begin{equation} \label{A-3-1}
f^{N;1}[\Psi, \kappa](t,\omega) := \int_{(\bbs^{d-1})^{N-1}}f^N[\Psi, \kappa](t,\omega, \omega_2, \dots, \omega_N)\,d\omega_2\dots d\omega_N.
\end{equation} 
Then, we use the marginal $f^{N;1}$ to construct the cost functional $\tilde{\mathcal J}$ for the microscopic flow \eqref{A-2} as
\begin{align}\label{A-4}
\begin{aligned}
\tilde{\mathcal J} (f^N[\Psi,\kappa],\Psi,\kappa) &:= \frac{\alpha}{2} \int_0^T \int_\bbsd ( f^{N;1}(t,\omega) - z(t,\omega))^2 d\omega dt \\
&\quad + \frac{\beta}{2} \int_{\bbsd} ( f^{N;1}(T,\omega) - z(T,\omega))^2 d\omega \\
&\quad + \frac12 \int_0^T \int_{\bbsd} ( \lambda_1 |\Psi(t,\omega)|^2 + \lambda_2|\kappa(t,\omega)|^2) d\omega dt. 
\end{aligned}
\end{align}
One can easily notice that \eqref{A-4} is structurally the same as \eqref{A-1}. 

% This microscopic viewpoint is important both for modeling and computation: in applications one often designs controls from particle simulations, while in large-population regimes one would like to justify replacing the particle problem by the mean-field PDE control problem.
%
 
Our main results establish a rigorous bridge between finite-dimensional optimal control problem of stochastic interacting agents on the sphere and the corresponding mean-field optimal control problem. First, at the  microscopic level, we formulate the controlled Liouville equation for a system of interacting agents with additive noises and show the existence of optimal controls within an admissible class, relying on the coercivity of the cost functional and weak stability of a solution to the Liouville equation. Second, for any fixed pair of admissible controls, we derive a quantitative stochastic mean-field limit: the one-particle marginal of a solution to the Liouville equation converges to a solution to the controlled mean-field PDE with an explicit convergence rate of an order $N^{-1/2}$, based on relative entropy arguments.   Finally, combining compactness of admissible controls with the quantitative mean-field estimate, we show that the microscopic optimal controls provide consistent approximations to the mean-field optimal controls. Precisely, any weak accumulation point of a sequence of particle-level minimizers is an optimal control for the mean-field problem and the minima of the microscopic cost functional converge to the minimum of the mean-field one as $N\to\infty$. These three assertions are stated below in Theorem \ref{T1.1}.

Here, we remark that our aforementioned framework builds upon on \cite{CWZ}, where the authors studied a mean-field optimal control problem for the Kuramoto model based on the ideas from \cite{FS14, SBB22}.  We extend this Kuramoto framework to the high-dimensional setting and consider the associate mean-field equation posed on the unit sphere. In \cite{CWZ}, the authors used $\mathbb{S}^1 \equiv \mathbb{T}^1$ and employed the propagation of chaos result on $\mathbb{T}^d$ from \cite{JW18} to close the relative entropy estimates. However, since the equivalence $\mathbb{S}^d \equiv \mathbb{T}^d$ holds only for $d=1$, this argument cannot be directly adapted to our case, i.e., $d\ge 2$. The main novelty of our work lies in handling this difficulty; we reformulate our microscopic model as an interacting particle system on $\bbr^d$ with smooth interaction kernels.   From the Liouville equation for the reformulated microscopic system on $\bbr^d$, we can recover the Liouville equation for the microscopic model \eqref{A-2} (see Section \ref{sec:2} for details). This allows us to adopt classical mean-field limit results (see \cite{S91} for example), which in turn enables us to close the relative entropy estimates. To perform all of these without technical gaps, we choose our initial data $f_0$ for the mean-field model \eqref{A-0} to be the restriction to the unit sphere of some $F_0 \in H^{3/2} (\bbr^d)$. Then the trace theorem guarantees $H^1$-regularity for $f_0$, which is necessary for our compactness argument employed for the construction of the mean-field optimal control. We are now ready to state the main results.

\begin{theorem} \label{T1.1}
 Consider $F_0\in H^{3/2}(\bbr^d)$ and let the initial data $f_0$ be its restriction to the unit sphere.  

\begin{enumerate}
\item (Existence of optimal controls for \eqref{A-3}) For any fixed $N$, the cost functional $\tilde{\mathcal J}(f^N[\Psi,\kappa],\Psi,\kappa)$ defined in \eqref{A-4} has a pair of minimizers $(\Psi^N,\kappa^N)\in \mathcal U^2$, i.e., 
\[
\tilde{\mathcal J}(f^N[\Psi^N, \kappa^N], \Psi^N, \kappa^N) = \min_{(\Psi,\kappa) \in\mathcal U^2} \tilde{\mathcal J}(f^N[\Psi, \kappa], \Psi, \kappa) 
\]
where $f^N[\Psi,\kappa]$ is a solution to the Liouville equation \eqref{A-3} for some control functions $\Psi,\kappa\in \mathcal U$. 
\item (Stochastic mean-field limit) Let $f^{N;1}$ be the first marginal defined in \eqref{A-3-1} and $f$ be a solution to \eqref{A-0}. Then, there exists a uniform constant $C>0$ not depending on $N$ such that
\[
\|f^{N;1} - f\|_{L^\infty(0,T; L^1(\bbsd))} \leq \frac{C}{\sqrt N}
\]
for sufficiently large $N\in \bbn$.
\item (Existence of optimal controls for \eqref{A-0}) Any weak accumulation point $(\bar \Psi,\bar \kappa) \in \mathcal U^2$ of $(\Psi^N, \kappa^N)_{N\in \bbn}$ is a minimizer of the cost functional $\mathcal J$ and 
\[
\mathcal J(f[\bar \Psi,\bar \kappa],\bar \Psi,\bar \kappa) = \min_{(\Psi,\kappa)\in \mathcal U^2} \mathcal J ( f[\Psi,\kappa],\Psi,\kappa).
\]
\end{enumerate}
\end{theorem}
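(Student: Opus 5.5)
We prove the three assertions in order, since (3) uses both (1) and (2).

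\textbf{Part (1): direct method.} Fix $N$ and take a minimizing sequence $(\Psi_n,\kappa_n)\subset\mathcal U^2$ for $\tilde{\mathcal J}$. Since $\mathcal U$ is bounded in $L^\infty(0,T;W^{1,q}(\bbsd))$, the dual of $L^1(0,T;W^{-1,q'})$, Banach--Alaoglu gives a subsequence converging weak-$*$ to some $(\Psi^N,\kappa^N)\in\mathcal U^2$. The set $\mathcal U$ is convex and closed, hence weak-$*$ closed.

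For $q>d-1$, Morrey's embedding together with Aubin--Lions (in $\omega$ only, since no time regularity is available) is not enough to get strong convergence of the controls. Instead we pass to the limit in the weak formulation of \eqref{A-3}. Energy estimates for \eqref{A-3} give a uniform bound on $f^N_n$ in $L^\infty(0,T;L^2)\cap L^2(0,T;H^1)$, with constants depending on $N$, which is fine here. From the equation, $\partial_t f^N_n$ is bounded in $L^2(0,T;H^{-1})$. Aubin--Lions then gives $f^N_n\to f^N$ strongly in $L^2(0,T;L^2)$.

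In the weak formulation, the drift terms have the form $\Psi_n(t,\omega_i)\omega_i f^N_n$, so each is a product of a weakly converging and a strongly converging sequence. This passes to the limit in duality with smooth test functions, so $f^N=f^N[\Psi^N,\kappa^N]$. Uniqueness for the linear equation \eqref{A-3} identifies the limit.

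Strong convergence of the marginals $f^{N;1}_n$ in $L^2$ makes the running tracking term converge. For the terminal term, we use weak convergence of $f^N_n(T)$ together with weak lower semicontinuity of the $L^2$ norm. The control penalty is also weakly lower semicontinuous. Hence $(\Psi^N,\kappa^N)$ is a minimizer.

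\textbf{Part (2): relative entropy.} Following the reformulation announced above, we extend the dynamics to $\bbr^d$: the drift $\Psi(t,x)x+\kappa(t,x)(y-\langle x,y\rangle x)$ is extended smoothly, after composing $\Psi,\kappa$ with the radial projection $x/|x|$ and cutting off away from the sphere, and the noise is written as $\sqrt{2\sigma}\,\mathbb P(x)\circ dW$. The sphere stays invariant, so the Liouville and mean-field equations on $\bbr^d$ restrict to \eqref{A-3} and \eqref{A-0}.

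We then compute
\[
\frac{d}{dt}\frac1N H(f^N\,|\,f^{\otimes N}),
\]
where $H$ denotes relative entropy. The diffusion produces a negative Fisher-information term. The interaction error produces
\[
\mathbb E\Big|\frac1N\sum_k \kappa(t,\omega_i)\,\bbp(\omega_i)\big(\omega_k-\textstyle\int\omega_* f\,d\omega_*\big)\Big|^2 .
\]
Because the kernel $\phi(\omega_i,\omega_k)=\kappa(t,\omega_i)\,\bbp(\omega_i)\omega_k$ is bounded, with $|\kappa|\le CM$ by Morrey for $q>d-1$, we can apply either the Jabin--Wang large-deviation lemma or simply Young's inequality $ab\le \varepsilon a^2+b^2/(4\varepsilon)$ against the Fisher information. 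For the resulting fluctuation term, the law of large numbers estimate under $f^{\otimes N}$ gives $O(1/N)$ via the cancellation $\mathbb E_{f^{\otimes N}}[\phi(\omega_i,\omega_k)-\int\phi(\omega_i,\cdot)\,df]=0$ for $k\neq i$. To transfer that estimate from $f^{\otimes N}$ to $f^N$, we use the Donsker--Varadhan entropy inequality combined with the exponential-moment bound $\sup_N\mathbb E\,e^{\,N|\cdot|^2/C}<\infty$ for bounded centered kernels.

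Gr\"onwall's inequality then gives $\frac1N H(f^N\,|\,f^{\otimes N})(t)\le C/N$. Subadditivity yields $H(f^{N;1}\,|\,f)\le C/N$, and Csisz\'ar--Kullback--Pinsker gives the $C/\sqrt N$ bound in $L^1$. The constant depends only on $M,\sigma,T$, so it is uniform over $\mathcal U^2$, which is needed for (3). Positivity and regularity of $f$ come from the parabolic theory for \eqref{A-0} with $f_0\in H^1$, obtained from the trace of $F_0\in H^{3/2}$.

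\textbf{Part (3): $\Gamma$-type argument.} Let $(\Psi^N,\kappa^N)\rightharpoonup(\bar\Psi,\bar\kappa)$ along a subsequence.

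\emph{Liminf inequality.} Write $f_N=f[\Psi^N,\kappa^N]$. Uniform $H^1$ bounds for \eqref{A-0}, which hold for $f_0\in H^1$ uniformly over $\mathcal U$, combined with Aubin--Lions and the weak-strong product argument of part (1), give $f_N\to f[\bar\Psi,\bar\kappa]$. By (2), $\|f^{N;1}[\Psi^N,\kappa^N]-f_N\|_{L^\infty L^1}\le C/\sqrt N$. To convert this $L^1$ bound into an $L^2$ bound, we interpolate using uniform $L^\infty$ bounds, or we use lower semicontinuity with respect to weak $L^2$ convergence. Either way,
\[
\mathcal J(\bar f,\bar\Psi,\bar\kappa)\le\liminf_{N\to\infty}\tilde{\mathcal J}(f^N[\Psi^N,\kappa^N],\Psi^N,\kappa^N).
\]

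\emph{Limsup inequality.} For any $(\Psi,\kappa)\in\mathcal U^2$, (2) gives $\tilde{\mathcal J}(f^N[\Psi,\kappa],\Psi,\kappa)\to\mathcal J(f[\Psi,\kappa],\Psi,\kappa)$, again after upgrading $L^1$ convergence to $L^2$.

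Minimality of $(\Psi^N,\kappa^N)$ for $\tilde{\mathcal J}$ chains these two inequalities, which proves optimality of $(\bar\Psi,\bar\kappa)$ and convergence of the minimal values.

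The main obstacle is the uniform-in-$N$ relative entropy estimate in (2) on the manifold, together with the $L^1\to L^2$ upgrade in (3). For the latter, I would try uniform $L^\infty$ bounds on $f^{N;1}$ obtained from the maximum principle for the Liouville equation, since the divergence of the drift is bounded.
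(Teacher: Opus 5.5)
Parts (1) and (2) are sound, and they differ usefully from the paper; the gap is in the recovery (limsup) step of part (3).

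\textbf{What works.} In (1) you pass to the limit in the full, closed linear Liouville equation on $(\bbsd)^N$ with $N$-dependent bounds. This avoids identifying the limit through the unclosed marginal equation \eqref{fN1}. In (2) you move the law of large numbers from $f^{\otimes N}$ to $f^N$ by the Donsker--Varadhan inequality and exponential moments, instead of the synchronous coupling lemma of Section \ref{sec:2}. You therefore only use $\|\kappa\|_{L^\infty}\le\tilde M$, with no Lipschitz regularity of the controls. Your constant is uniform over $\mathcal U^2$, which (3) genuinely needs. The liminf half of (3) is also fine, via $L^1$ convergence of $f^{N;1}[\Psi^N,\kappa^N]$ to $f[\bar\Psi,\bar\kappa]$ and Fatou or weak lower semicontinuity.

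\textbf{The gap.} To conclude optimality you must show, for an arbitrary fixed competitor $(\hat\Psi,\hat\kappa)\in\mathcal U^2$, that $\limsup_{N}\tilde{\mathcal J}(f^N[\hat\Psi,\hat\kappa],\hat\Psi,\hat\kappa)\le\mathcal J(f[\hat\Psi,\hat\kappa],\hat\Psi,\hat\kappa)$. This is an upper bound on $\|f^{N;1}-z\|_{L^2}$. Neither the $L^1$ rate from (2) nor any lower semicontinuity argument can deliver it. A small relative entropy of $f^{N;1}$ with respect to $f$ does not control $L^2$ tails either.

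Your proposed fix via the maximum principle fails for three reasons:
\begin{enumerate}
\item For controls in $\mathcal U$, the divergence of the drift contains $\nabla_\omega\Psi$ and $\nabla_\omega\kappa$, which are only in $L^q$, so it is not bounded.
\item $f_0\in H^1(\bbsd)$ is not bounded once $d\ge 3$.
\item Even with a bounded divergence, the maximum principle on $(\bbsd)^N$ only gives $\|f^N(t)\|_{L^\infty}\le e^{CNt}\|f_0\|_{L^\infty}^N$, because the divergence of the $N$-particle drift is a sum of $N$ terms. Nothing uniform in $N$ reaches $f^{N;1}$.
\end{enumerate}
Your $N$-dependent energy bounds from (1) are useless here for the same reason.

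\textbf{The missing ingredient} is $N$-uniform regularity of the marginal. The paper gets it from the first two equations of the BBGKY hierarchy \eqref{fN1}--\eqref{fN2}. The pointwise bound $\bigl|\int_{\bbsd} f^{N;3}(\omega,\tilde\omega,\hat\omega)\,\bbp(\omega)\hat\omega\,d\hat\omega\bigr|\le 2f^{N;2}(\omega,\tilde\omega)$ closes an $L^2$ energy estimate for $f^{N;2}$ with constants independent of $N$ (Lemma \ref{L3.1}). The resulting bound on $\nabla_\omega f^{N;2}$ in $L^2_{t,\omega,\tilde\omega}$ then feeds an $H^1$ estimate for \eqref{fN1}. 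This gives $\sup_N\|f^{N;1}\|_{L^\infty(0,T;H^1(\bbsd))}<\infty$, uniformly over $\mathcal U^2$ (Lemma \ref{L3.2}).

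With that bound, use $H^1(\bbsd)\hookrightarrow L^p(\bbsd)$ for some $p>2$ and the interpolation $\|g\|_{L^2}\le\|g\|_{L^1}^{\theta}\|g\|_{L^p}^{1-\theta}$ with $\theta\in(0,1)$. This turns your $C/\sqrt N$ estimate into convergence of $f^{N;1}$ to $f$ in $L^\infty(0,T;L^2(\bbsd))$, including at $t=T$. That yields both Lemma \ref{L4.2} and the recovery step. The paper phrases this step via an $L^1(0,T;L^\infty)$ bound, which $H^1(\bbsd)$ supplies only when $d=2$; the interpolation form is what works in general.
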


\begin{remark}
Since $\bbsd$ is the smooth boundary of the unit ball in $\bbr^d$, the classical Sobolev trace theorem can be applied (see \cite[Theorem 9.4]{LM} for instance). Precisely, for $s>\frac12$, the trace operator extends uniquely to a bounded linear map
\[
\Tr: H^s(\bbr^d) \to H^{s-\frac12} (\bbs^{d-1}) . 
\]
Thus for $F_0|_{\bbs^{d-1}} = f_0$, there exists a constant $C>0$ such that
\[
\|f_0\|_{H^{s-\frac12}(\bbsd)} \leq C \|F_0\|_{H^s(\bbr^d)}.
\]
\end{remark}

The rest of this paper is organized as follows. In Section \ref{sec:2}, we provide several preparatory results for the main results. Specifically, we first reformulate our microscopic and macroscopic models and provide propagation of chaos results. Then, we recall several properties of Sobolev spaces on the unit sphere. Section \ref{sec:3} is devoted to the  proofs of Theorem \ref{T1.1}.

\section{Preliminaries} \label{sec:2}
\setcounter{equation}{0}

In this section, we present reformulated models at microscopic and macroscopic scales, respectively, and recall the properties of Sobolev spaces on the unit sphere.

\subsection{Regularized system}

The mean-field PDE is motivated by the following controlled dynamical multi-agent model:
\begin{equation}\label{ptcle_syst}
dx_i = \Psi(t,x_i) x_i dt + \frac{\kappa(t,x_i)}{N} \sum_{k=1}^N \bbp(x_i)x_k dt + \sqrt{2\sigma} \mathbb{P}(x_i) \circ dW_i
\end{equation}
where $\bbp(x) := \mathbb{I}- \frac{x \otimes x}{|x|^2}$ denotes the $d\times d$ projection  matrix onto the unit sphere $\bbs^{d-1}$. Note that the above can be written in the It$\hat{\textup{o}}$ sense:
\[
\begin{aligned}
dx_i &= \Psi(t,x_i) x_i dt + \frac{\kappa(t,x_i)}{N} \sum_{k=1}^N\bbp(x_i)x_k dt \\
&\quad -\sigma(d-1)\frac{x_i}{|x_i|^2}dt + \sqrt{2\sigma} \mathbb{P}(x_i) dW_i.
\end{aligned}
\]
Here, we first note that the particle system can be recast as a system in $\bbr^d$:
\bq\label{ptcle_syst_rd}
\begin{aligned}
dx_i &= \tilde{\Psi}(t,x_i) \frac{x_i}{|x_i|^2} dt + \frac{\tilde{\kappa}(t,x_i)}{N} \sum_{k=1}^N\bbp(x_i)x_k dt \\
&\quad -\sigma(d-1)\frac{x_i}{|x_i|^2}dt + \sqrt{2\sigma} \mathbb{P}(x_i) dW_i
\end{aligned}
\eq
 where $\tilde{\Psi}(t,x_i):= \Psi\lt(t, \frac{x_i}{|x_i|}\rt)$ and $\tilde\kappa (t,x_i) := \kappa\lt(t, \frac{x_i}{|x_i|}\rt)$.  Then, the corresponding Liouville equation would be of the form

\[\begin{aligned}
\pa_t F^N &+ \sum_{i=1}^N \nabla_{x_i} \lt(\lt(\tilde\Psi(t,x_i) \frac{x_i}{|x_i|^2}+ \frac{\tilde\kappa(t,x_i)}{N}\sum_{k=1}^N \bbp(x_i)x_k -\sigma(d-1) \frac{x_i}{|x_i|^2}\rt)F^N \rt) \\
&= \sigma\sum_{i=1}^N  \sum_{\ell,m=1}^d \pa_{x_i^m}\pa_{x_i^\ell}(\bbp(x_i)_{\ell m}F^ N),
\end{aligned}\]
subject to the initial data $F_0^N$ satisfying $F_0^N |_{(\mathbb{S}^{d-1})^N} = f_0^{\otimes N}$.  Here, for $1 \le \ell, m \le d$, $x_i^\ell$ and $x_i^m$ denote the $\ell$-th and $m$-th component of the vector $x_i \in \bbr^d$, respectively,  and $\mathbb{P}(x_i)_{\ell m}$ denotes the $(\ell, m)$-component of the matrix $\mathbb{P}(x_i)$.

 Once we set $f^N := F^N|_{(\bbs^{d-1})^N}$, we have that $f_0^N = f_0^{\otimes N}$ and it satisfies the following in a weak sense  (see \cite[Section 4.1]{HKLN19} or \cite[Section 2.3]{BCC12} for relevant details)  and in a strong sense if $f^N$ is initially smooth:
\[%\begin{equation} \label{liouville}
\pa_t f^N + \sum_{i=1}^N \nabla_{\omega_i} \cdot\lt(\lt(\Psi(t,\omega_i) \omega_i+ \frac{\kappa(t,\omega_i)}{N}\sum_{k=1}^N \bbp(\omega_i)\omega_k \rt)f^N \rt) = \sigma \sum_{i=1}^N \Delta_{\omega_i} f^N.
%\end{equation}
\]
%Then, the corresponding cost functional is defined as
%\[
%\begin{aligned}
%\mathcal J (f^N[\Psi,\kappa],\Psi,\kappa) &:= \frac{\alpha}{2}\int_0^T \int_{\bbs^{d-1}} |f^{N;1}(t,\omega) - z(t,\omega)|^2\,d\omega dt \\
%&\quad + \frac{\beta}{2}\int_{\bbs^{d-1}}|f^{N;1}(T,\omega) - z(T,\omega)|^2\,d\omega\\
%&\quad + \frac12\int_0^T \int_{\bbs^{d-1}} \lt(\lambda_1 |\Psi(t,\omega)|^2 + \lambda_2 |\kappa(t,\omega)|^2\rt)\,d\omega dt
%\end{aligned}
%\]
%where $f^{N;1}$ denotes the first marginal of $f^N$:
%\[
%f^{N;1}(\Psi, \kappa) := \int_{(\bbs^{d-1})^{N-1}}f^N[\Psi, \kappa](t,\omega_1, \omega_2, \dots, \omega_N)\,d\omega_2\dots d\omega_N.
%\]
Similarly, for the McKean--Vlasov process 
\bq\label{MV_proc}
d{\bar x}_i = \lt(\Psi(t, {\bar x}_i) {\bar x_i} + \kappa(t,{\bar x}_i) \int_{\bbs^{d-1}}\bbp({\bar x}_i) x f_t \,d\omega \rt) + \sqrt{2\sigma} \bbp({\bar x}_i)\circ dW_i,
\eq
which is governed by the law

\[
\p_t f + \nabla_\omega \cdot ( \Psi\omega f + \kappa L[f]f) = \sigma \Delta_\omega f,
\]
we can also recast it as a system or PDE on $\bbr^d$:
\[
d{\bar x}_i = \lt(\Psi(t, {\bar x}_i) \frac{{\bar x_i}}{|{\bar x}_i|^2} + \kappa(t,{\bar x}_i) \int_{\bbr^d}\bbp({\bar x}_i) x F_t \,dx \rt)dt + \sqrt{2\sigma} \bbp({\bar x}_i)\circ dW_i,
\]
and a PDE on $\bbr^d$:
\[
\pa_t F + \nabla_x \cdot \lt[\lt(\Psi \frac{x}{|x|^2} + \kappa \int_{\bbr^d} \bbp(x) x_* F\,dx_*  - \sigma(d-1) \frac{x}{|x|^2}\rt) F\rt] = \sigma \sum_{i,j=1}^d \pa_i \pa_j \lt(\bbp(x)_{ij} F \rt)
\]
subject to initial data $F_0$ which satisfies $F_0|_{\bbs^{d-1}} = f_0$.

However, due to the singularities observed in the system recast in $\bbr^d$,   we do not directly associate \eqref{ptcle_syst} with \eqref{ptcle_syst_rd} to measure the deviation of the microscopic flow \eqref{ptcle_syst}
from the mean-field flow \eqref{MV_proc}. Instead, we introduce some cutoff functions in \eqref{ptcle_syst_rd} and present a regularized system  which still yields the same Liouville equation under restriction to $\bbs^{d-1}$. Motivated by \cite{BCC12}, we define smooth functions $h: \bbr^d \to \bbr$ and $\chi : \bbr^d \to \bbr$ satisfying
\[
h(x) := \left\{\begin{array}{lcl} \frac{1}{|x|^2} & \mbox{if} & |x|\ge \frac12,\\
20 &\mbox{if}& |x|<\frac14. \end{array}\right. \quad \chi(x) := \left\{\begin{array}{lcl} 1 & \mbox{if} &|x| \ge \frac12 \\
0 &\mbox{if} &|x|<\frac14.\end{array}\right.
\]
Then we can also define smooth functions $\varphi_1 : \bbr^d \to \bbr^d\times\bbr^d$, $\varphi_2: \bbr^d \to \bbr^d$ with bounded derivatives satisfying
\[\begin{aligned}
\varphi_1 (x) &=\mathbb{I}-h(x)x\otimes x,\\
\varphi_2 (x) &=h(x) x.
%\sigma_3(v) &= v \quad \mbox{if} \quad |v|\le 2.
\end{aligned}
\]
Note that $\varphi_1(x) = \mathbb{P}(x)$ and $\varphi_2(x) = \frac{x}{|x|^2}$ on $|x| \ge \frac 12$.  We plug  these functions into the microscopic systems and mean-field models recast in $\bbr^d$ and rewrite them as
\begin{equation}\label{ptcle_reg}
\begin{aligned}
dx_i &= \tilde\Psi(t,x_i)\chi(x_i) \varphi_2(x_i ) dt + \frac{\tilde{\kappa}(t,x_i) \chi(x_i)}{N} \sum_{k=1}^N\varphi_1(x_i)x_k dt \\
&\quad -\sigma(d-1)\varphi_2(x_i )dt + \sqrt{2\sigma} \varphi_1(x_i) dW_i,
\end{aligned}
\end{equation}
and 
\bq\label{MV_reg}
\begin{aligned}
d{\bar x}_i &= \lt(\tilde\Psi(t, {\bar x}_i)\chi({\bar x}_i)\varphi_2(\bar x_i) + \kappa(t,{\bar x}_i)\chi({\bar x}_i) \int_{\bbs^{d-1}}\varphi_1({\bar x}_i) x f_t \,d\omega  -\sigma(d-1)\varphi_2({\bar x}_i) \rt)dt \\
&\qquad+ \sqrt{2\sigma} \varphi_1({\bar x}_i) dW_i,
\end{aligned}
\eq
respectively. Note that if the initial data lie on the unit sphere, these systems coincide with \eqref{ptcle_syst} and \eqref{MV_proc}, respectively. Moreover, we can write the corresponding Liouville equation as 
\[\begin{aligned}
\pa_t \tilde{F}^N &+ \sum_{i=1}^N \nabla_{x_i} \lt(\lt(\tilde{\Psi}(t,x_i)\chi(x_i)\varphi_2(x_i)+ \frac{\tilde{\kappa}(t,x_i) \chi(x_i)}{N}\sum_{k=1}^N \varphi_2(x_i)x_k -\sigma(d-1) \varphi_2(x_i)\rt)\tilde{F}^N \rt) \\
&= \sigma\sum_{i=1}^N  \sum_{\ell,m=1}^d \pa_{x_i^m}\pa_{x_i^\ell}(\varphi_1(x_i)_{\ell m}\tilde{F}^N),
\end{aligned}\]
and the law of \eqref{MV_reg} as
\[
\pa_t \tilde{F} + \nabla_x \cdot \lt[\lt(\tilde\Psi \chi \varphi_2(x) + \tilde\kappa  \chi\int_{\bbr^d} \varphi_1(x) x_* \tilde{F}\,dx_*  - \sigma(d-1) \varphi_2(x)\rt) \tilde{F}\rt] = \sigma \sum_{i,j=1}^d \pa_i \pa_j \lt(\varphi_1(x)_{ij} \tilde{F} \rt).
\]
 From the estimates in \cite{HKLN19},   we can also check that $\tilde{F}^N |_{(\bbs^{d-1})^N} = f^N$ and $\tilde{F}|_{\bbs^{d-1}} = f$. At this stage, we can use classical results (see \cite{S91}) to yield the following.

\begin{lemma} 
Suppose $|x_i(0)| = |\bar{x}_i(0)|=1$ for every $i=1,\dots, N$. Then, the solutions to \eqref{ptcle_syst} and \eqref{MV_proc}  coincide with solutions to \eqref{ptcle_reg} and \eqref{MV_reg}, respectively, which emanate from the same initial data. Moreover,  
for given $T>0$, there exists $C>0$ depending on $T$ such that
\[
\bbe\lt[ |x_i - \bar{x}_i|^2 \rt] \le \frac CN, \quad \mbox{for every} \quad 1 \le i \le N \quad \mbox{and} \quad t \le T.
\]
\end{lemma}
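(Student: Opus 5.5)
We prove the two assertions separately: first that the regularized systems leave the unit sphere invariant, then the $\mathcal O(1/N)$ estimate by a Sznitman-type synchronous coupling.

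\textbf{Step 1: coincidence of solutions.} Let $x_i$ solve \eqref{ptcle_syst} with $|x_i(0)|=1$. The Stratonovich noise $\mathbb P(x_i)\circ dW_i$ is tangential, and so are the drifts $\Psi x_i$ (since $\Psi$ is skew-symmetric) and $\mathbb P(x_i)x_k$. Hence It\^o's formula gives $d|x_i|^2=0$, so $|x_i(t)|=1$ for all $t$ almost surely.

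On $\{|x|\ge \frac12\}$ we have $\chi=1$, $\varphi_1=\mathbb P$ and $\varphi_2=x/|x|^2$. Therefore on the sphere the It\^o drift of \eqref{ptcle_syst}, namely $\Psi x_i+\frac{\kappa}{N}\sum_k\mathbb P(x_i)x_k-\sigma(d-1)x_i$, coincides with the drift of \eqref{ptcle_reg}, and the diffusion matrices coincide as well. The first term needs care: $\tilde\Psi\varphi_2(x_i)=\Psi x_i$ when $|x_i|=1$. So the spherical solution also solves \eqref{ptcle_reg}.

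The coefficients of \eqref{ptcle_reg} are globally Lipschitz. This uses the cutoffs $h,\chi$ together with the $W^{1,q}$ regularity of $\Psi,\kappa$, which I assume embeds into Lipschitz, or else the controls are taken smooth for this lemma. The system is also globally Lipschitz in $(x_1,\dots,x_N)$ on bounded sets, and all dynamics stay in a bounded region. Pathwise uniqueness then identifies the two solutions. The same argument applies to \eqref{MV_proc} and \eqref{MV_reg}, with $f_t$ given; here we use that $\tilde F|_{\bbs^{d-1}}=f$ with $\tilde F_t$ supported on the sphere.

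\textbf{Step 2: coupling estimate.} Drive $x_i$ and $\bar x_i$ by the same $W_i$ with the same initial data, and take $\bar x_i(0)$ i.i.d.\ with law $f_0$. Apply It\^o's formula to $|x_i-\bar x_i|^2$. Write $b(t,x)=\tilde\Psi\chi\varphi_2-\sigma(d-1)\varphi_2$ for the Lipschitz part and $a(t,x)=\tilde\kappa\chi\varphi_1(x)$ for the interaction prefactor, and split the drift difference into three pieces:
\[
b(x_i)-b(\bar x_i),\qquad
\Big(a(x_i)-a(\bar x_i)\Big)\frac1N\sum_k x_k + a(\bar x_i)\frac1N\sum_k (x_k-\bar x_k),\qquad
a(\bar x_i)\Big(\frac1N\sum_k \bar x_k-\int \omega f_t\,d\omega\Big).
\]
The quadratic variation term is bounded by $\|\nabla\varphi_1\|^2_\infty|x_i-\bar x_i|^2$. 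Since $|x_k|\le 1$ and all coefficients are bounded and Lipschitz, the first two pieces contribute at most
\[
C\,\bbe|x_i-\bar x_i|^2+\frac CN\sum_k\bbe|x_k-\bar x_k|^2 .
\]
The third piece is the law-of-large-numbers error. Because the $\bar x_k$ are i.i.d.\ with law $f_t$ and bounded, the centered vectors $Y_k=\bar x_k-\int \omega f_t\,d\omega$ are independent with mean zero, so
\[
\bbe\Big|\frac1N\sum_k Y_k\Big|^2=\frac1N\,\bbe|Y_1|^2\le \frac4N .
\]
Applying Young's inequality to this term gives a contribution of size $C/N$.

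\textbf{Step 3: closing.} Let $u(t)=\max_i\bbe|x_i-\bar x_i|^2$, which by exchangeability equals each $\bbe|x_i-\bar x_i|^2$. Steps 2 give $u'\le Cu+C/N$ with $u(0)=0$, so Gr\"onwall's lemma yields $u(t)\le CTe^{CT}/N$.

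The main obstacle is the Lipschitz regularity of the controls in Step 1: $\tilde\Psi$ and $\tilde\kappa$ are only $W^{1,q}$ in space. I would first try taking $q>d-1$, so that Morrey's embedding makes them Lipschitz uniformly in $t$ with bounds depending on $M$. If that fails, I would run the whole argument for smooth controls and pass to the limit by density.
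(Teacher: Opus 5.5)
Your Steps 1--3 are the classical Sznitman synchronous-coupling argument, written out in full. The paper gives no proof of its own: it regularizes the dynamics in $\bbr^d$ and cites \cite{S91}. Your sphere-invariance and pathwise-uniqueness step, the three-way splitting of the drift, the variance bound $\bbe|\frac1N\sum_k Y_k|^2=\frac1N\bbe|Y_1|^2$ and the Gr\"onwall closure are all sound, provided the coefficients are Lipschitz in $x$. The same hypothesis is also what your pathwise uniqueness relies on. So does the fact that the $\bar x_k$ are i.i.d., since that requires each $\bar x_k$ to be a measurable functional of $(\bar x_k(0),W_k)$.

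The step that fails is your resolution of that proviso. On the $(d-1)$-dimensional sphere, Morrey's embedding gives only $W^{1,q}(\bbsd)\subset C^{0,\lambda}(\bbsd)$ with $\lambda=1-\frac{d-1}{q}<1$ (this is Lemma~\ref{sb}(1)). It never gives a Lipschitz bound for finite $q$.

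With merely $\lambda$-H\"older $\Psi,\kappa$, the pieces $b(x_i)-b(\bar x_i)$ and $(a(x_i)-a(\bar x_i))\frac1N\sum_k x_k$ contribute $Cu^{(1+\lambda)/2}$ (after Jensen) rather than $Cu$. The inequality $u'\le Cu+Cu^{(1+\lambda)/2}+\frac CN$ with $u(0)=0$ then yields no bound that vanishes as $N\to\infty$. The reason is that $v'=Cv^{(1+\lambda)/2}$ has the nontrivial solution $v(t)=c\,t^{2/(1-\lambda)}$ starting from $v(0)=0$, so the Osgood condition fails.

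The density fallback does not rescue this either. For smooth approximants $\Psi^\veps,\kappa^\veps$ your constant has the form $Ce^{C(1+L_\veps)T}$, where $L_\veps$ are their Lipschitz constants. These are not controlled by the $W^{1,q}$-norm and blow up as $\veps\to0$, so no $N$-independent bound survives the limit.

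As written, your argument proves the lemma only for controls that are Lipschitz in $\omega$ uniformly in $t$, e.g.\ $\Psi,\kappa\in L^\infty(0,T;W^{1,\infty}(\bbsd))$, with $C$ depending on those Lipschitz bounds. The paper's appeal to \cite{S91} tacitly requires the same hypothesis, so the paper does not resolve this point either. Handling genuinely $W^{1,q}$ controls needs a tool that exploits the nondegenerate tangential noise, such as a Zvonkin-type transformation that removes the rough drift, rather than plain synchronous coupling.
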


%\subsection{Relative entropy estimates}
%
%Using the above, we can obtain relative entropy estimates: Define
% Hence, we need to show
%
%\begin{enumerate}
%\item
%Existence of the optimal control for particle and PDE problems
%
%\item
%Approximation of optimal control for PDE problem via optimal controls at microscopic level
%\end{enumerate}

\subsection{Preliminaries on the unit sphere}

Let $\bbsd \subset \bbr^d$ be the unit sphere equipped with the (normalized) surface measure $d\sigma$, and denote $\Delta_\omega$ by the Laplace-Beltrami operator on $\bbsd$. The spherical harmonics are defined as the eigenfunctions of $(-\Delta_\omega)$. More precisely, for each integer $\ell\in \bbz_+$ called degree, let $\mathcal H_\ell$ be the eigenspace of $(-\Delta_\omega)$ with an eigenvalue $\lambda_\ell:= \ell(\ell+d-2)$. Note that each space $\mathcal H_\ell$ is finite-dimensional, and eigenfunctions corresponding to distinct eigenvalues are orthogonal in $L^2(\bbsd)$. By choosing an orthonormal basis $\{Y_{\ell,k}\}_{k=1}^{N(d,\ell)}$ where $N(d,\ell)$ is the dimension of $\mathcal H_\ell$, the family $\{Y_{\ell,k}\}_{\ell\geq0,1\leq k \leq N(d,\ell)}$ forms a complete orthonormal system in $L^2(\bbsd)$. Consequently,  every $f\in L^2(\bbsd)$ admits an $L^2$-convergent expansion:
\[
f(\omega) = \sum_{\ell=0}^\infty \sum_{k=1}^{N(d,\ell)} a_{\ell,k} Y_{\ell,k}(\omega),\quad a_{\ell,k} := \int_{\bbsd} f(\omega) \overline{Y_{\ell,k}(\omega)}d\sigma(\omega)
\]
and Parseval's identity holds:
\[
\|f\|_{L^2(\bbsd)} ^2 = \sum_{\ell=0}^\infty \sum_{k=1}^{N(d,\ell)} |a_{\ell,k}|^2.
\]
This spectral decomposition provides a representation of $L^2(\bbsd)$ as the Hilbert direct sum of the eigenspaces of $(-\Delta_\omega)$:
\[
L^2(\bbsd) = \bigoplus_{\ell=0}^\infty \mathcal H_\ell(\bbsd).
\]
In addition, we observe
\[
\nabla_\omega f = \sum_{\ell=0}^\infty \sum_{k=1}^{N(d,\ell)} a_{\ell,k} \nabla_\omega Y_{\ell,k} 
\]
which gives
\[
\|\nabla_\omega f\|_{L^2}^2 = \sum_{\ell=0}^\infty \sum_{k=1}^{N(d,\ell)} \lambda_\ell |a_{\ell,k}|^2.
\]
For $s\in \bbr$, the (inhomogeneous) Sobolev space $H^s(\bbsd)$ is defined by 
\[
H^s(\bbsd) := \{ f \in L^2(\bbsd) : \sum_{\ell=0}^\infty \sum_{k=1}^{N(d,\ell)} (1+\lambda_\ell)^s |a_{\ell,k}|^2<\infty\},
\]
equipped with the norm
\[
\|f\|_{H^s{(\bbsd)}}^2 = \sum_{\ell=0}^\infty \sum_{k=1}^{N(d,\ell)} (1+\lambda_\ell)^s |a_{\ell,k}|^2.
\]
Equivalently, it can be written as
\[
\|f\|_{H^s(\bbsd)} = \| (I - \Delta_\omega)^\frac{s}{2} f\|_{L^2(\bbsd)}.
\]
For instance with $s=1$, 
\[
\|f\|_{H^1(\bbsd)}^2 = \|f\|_{L^2(\bbsd)}^2 + \|\nabla_\omega f\|_{L^2(\bbsd)}^2.
\]

On the other hand for $W^{1,q}(\bbsd)$, it would not be natural to use the spherical harmonics. Instead, we recall the definition using local coordinate charts from \cite[Chapter 2]{H}. Let $\{(U_i,\varphi_i)\}_{i=1}^M$ be a fixed finite smooth atlas of $\bbsd$ where each $\varphi_i :U_i \to V_i \subseteq\bbr^d$ is a $C^\infty$-diffeomorphism onto an open set $V_i$, and choose a subordinate $C^\infty$ partition of unity $\{\eta_i\}_{i=1}^M$. For $q\in [1,\infty)$, we define the $L^q$-norm $f$ on $\bbsd$ as
\[
\|f\|_{L^q(\bbsd)}^q := \sum_{i=1}^M \int_{V_i} | (\eta_i f) \circ \varphi_i^{-1}(x)|^q J_i(x) dx,
\]
where $J_i(x)$ denotes the Jacobian density associated with the change of variables $\varphi_i$, i.e., $J_i(x) = \sqrt{\textup{det\,} g_i(x)}$ and $g_i$ is the Riemannian metric tensor of $\bbsd$ represented in the coordinate $x\in V_i$. For $q=\infty$, we set
\[
\|f\|_{L^\infty(\bbsd)} :=\esssup_{\omega\in \bbsd}|f(\omega)|.
\]
Note that this definition coincides with the intrinsic definition
\[
\|f\|_{L^q(\bbsd)}^q = \int_{\bbsd} |f(\omega)|^q d\sigma(\omega),\quad q\in [1,\infty).
\]

We begin by recalling the  Gagliardo-Nirenberg-Sobolev inequality on the unit sphere.	
\begin{lemma}\cite{B93,BDS} \label{GN}
Let $u\in H^1(\bbsd)$. Then, we have
\[
\|\nabla u\|_{L^2(\bbsd)}^2 \geq \frac{d}{p-2} \left( \|u\|_{L^p(\bbsd)}^2 - \|u\|_{L^2(\bbsd)}^2      \right)
\]
for any $p\in [1,2)\cup (2,\infty)$ if $d=1,2$ and for any $p\in [1,2)\cup (2,2^*]$ if $d\geq 3$. Here, $2^*$ is the critical Sobolev exponent, i.e., $2^* = \frac{2d}{d-2}$ for $d\geq3$ and $2^*=\infty$ for $d=1,2$. 
\end{lemma}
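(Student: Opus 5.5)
The plan is to prove the inequality in the form in which it is stated. Here $d$ is the dimension parameter that enters through the spectral gap and the Ricci curvature of the sphere, as in the cited works \cite{B93,BDS}. The constant $\frac{d}{p-2}$ is then exactly the first nonzero eigenvalue of $-\Delta_\omega$ divided by $p-2$. I write $\mu_d:=d$ for the value playing the role of $\lambda_1$ and $\mathrm{Ric}\ge d-1$ for the curvature bound, with intrinsic dimension $n=d$. By homogeneity I may normalize $\|u\|_{L^2}=1$, and by density I may take $u$ smooth and positive; replacing $u$ by $|u|$ only lowers $\|\nabla u\|_{L^2}$. The argument splits into the two ranges $p\in[1,2)$ and $p\in(2,2^*]$, and in both ranges the target is the functional inequality
\[
\mathcal F_p[u]:=\|\nabla u\|_{L^2}^2-\frac{d}{p-2}\bigl(\|u\|_{L^p}^2-\|u\|_{L^2}^2\bigr)\ge0 .
\]

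\emph{Step 1: the range $1\le p<2$ (Beckner's argument).} I expand $u=\sum_\ell u_\ell$ in spherical harmonics as in the preliminaries above, so that $\|\nabla u\|_{L^2}^2=\sum_\ell \lambda_\ell\|u_\ell\|_{L^2}^2$. I then use hypercontractivity of the heat semigroup $e^{t\Delta_\omega}$ in Nelson's form, $\|e^{t\Delta_\omega}u\|_{L^2}\le\|u\|_{L^p}$ whenever $e^{-2dt}\le p-1$. This follows from the logarithmic Sobolev inequality with constant $2/d$, which in turn follows from the Bakry--\'Emery criterion $\Gamma_2\ge d\,\Gamma$ on the sphere. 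Writing $e^{-2dt}=p-1$ gives
\[
\|u\|_{L^p}^2\ge\sum_\ell (p-1)^{\lambda_\ell/d}\|u_\ell\|_{L^2}^2 .
\]
The inequality $\|u\|_{L^2}^2-\|u\|_{L^p}^2\le\frac{2-p}{d}\|\nabla u\|_{L^2}^2$ then reduces mode by mode to the elementary bound $1-(p-1)^{\lambda/d}\le(2-p)\lambda/d$. This bound follows from convexity of $s\mapsto(p-1)^s$ and is equality at $s=1$. For $\lambda_\ell\ge d$ it is the only check needed, and it is a one-line computation.

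\emph{Step 2: the range $2<p\le2^*$ (rigidity via a nonlinear Bochner argument).} Here I would follow Bidaut-V\'eron--V\'eron. I consider the Euler--Lagrange equation for minimizers of $\|\nabla u\|^2+\frac{d}{p-2}\|u\|_2^2$ under the constraint $\|u\|_p=1$, namely
\[
-\Delta_\omega u+\lambda u=\lambda u^{p-1},\qquad \lambda\le\frac{d}{p-2}.
\]
For $p<2^*$, minimizers exist by the compactness of $H^1\hookrightarrow L^p$. The key step is to show that every positive solution with $\lambda<\frac{d}{p-2}$ is constant; it then follows that $\mathcal F_p$ is minimized by constants, where it vanishes. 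To prove the rigidity I substitute $u=v^{-\gamma}$ with a suitably chosen exponent $\gamma$, multiply by $\Delta v\,v^{m}$, and integrate. Then I apply the Bochner formula
\[
\tfrac12\Delta|\nabla v|^2=|\nabla^2v|^2+\nabla v\cdot\nabla\Delta v+(d-1)|\nabla v|^2
\]
together with $|\nabla^2v|^2\ge\frac1d(\Delta v)^2$. After choosing $\gamma$, the resulting identity writes a combination of $\int(\lambda-\frac{d}{p-2})(\cdots)$ plus nonnegative trace-free Hessian terms as zero, and this forces $\nabla v\equiv0$. The endpoint case $\lambda=d/(p-2)$ follows by letting $\lambda\uparrow d/(p-2)$. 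The critical case $p=2^*$ follows by letting $p\uparrow 2^*$ in the inequality.

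\emph{Main obstacle.} The hard part is Step 2, namely choosing the exponents $(\gamma,m)$ so that every cross term is a perfect square or has a favorable sign for the full range $p\le2^*$. This is exactly where the curvature constant $d-1$ and the dimension $d$ must combine to give $d/(p-2)$. I would first try the exponent choice of Dolbeault--Esteban--Loss, which comes from the fast-diffusion flow $\partial_t w=w^{2-2\beta}(\Delta w+\kappa|\nabla w|^2/w)$. Along this flow $\mathcal F_p$ is monotone, and the admissible $\beta$ range is nonempty precisely when $p\le 2^*$.
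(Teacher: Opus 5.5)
There is a genuine gap, and it is in your opening stipulation rather than in the exponent bookkeeping of Step 2. The lemma is posed on $\mathbb{S}^{d-1}$, whose intrinsic dimension is $n=d-1$. By the paper's own formula $\lambda_\ell=\ell(\ell+d-2)$, its spectral gap is $\lambda_1=d-1$, its Ricci curvature is $d-2$, and its optimal log-Sobolev constant is $d-1$; a log-Sobolev constant can never exceed the spectral gap. Declaring $\mu_d:=d$, $\mathrm{Ric}\ge d-1$ and $n=d$ silently replaces the statement by Beckner's inequality on $\mathbb{S}^d$.

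On the actual sphere both of your steps break. In Step 1, hypercontractivity with $e^{-2dt}\le p-1$ would give a log-Sobolev constant $d>\lambda_1$, so it is false. Even granting it, your reduction needs $\lambda_\ell\ge d$ for $\ell\ge1$. But $\lambda_1=d-1$ gives $s=(d-1)/d<1$, and there strict concavity of $s\mapsto 1-(p-1)^s$ (value $0$ at $s=0$, value $2-p$ at $s=1$) gives $1-(p-1)^s>(2-p)s$. In Step 2, the Bochner data on $\mathbb{S}^{d-1}$ are $(d-2)|\nabla v|^2$ and $|\nabla^2 v|^2\ge\frac1{d-1}(\Delta v)^2$. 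The computation then produces $\frac{d-1}{p-2}$, not $\frac{d}{p-2}$.

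No choice of exponents can repair this, because the inequality as written is false on $\mathbb{S}^{d-1}$ for every $d\ge2$ and every admissible $p$. Take $u=1+\varepsilon\omega_1$ and set $m:=\int\omega_1^2\,d\omega>0$. Using $-\Delta_\omega\omega_1=(d-1)\omega_1$ and $\int\omega_1\,d\omega=\int\omega_1^3\,d\omega=0$, you get $\|\nabla u\|_{L^2}^2=(d-1)\varepsilon^2 m$ and $\|u\|_{L^p}^2-\|u\|_{L^2}^2=(p-2)\varepsilon^2 m+O(\varepsilon^4)$. The claimed right-hand side is therefore $d\varepsilon^2 m+O(\varepsilon^4)$, which exceeds the left-hand side for small $\varepsilon$; for $p=1$ the failure is exact.

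The paper itself gives no proof. It only cites Beckner and Bidaut-V\'eron--V\'eron, whose results are stated on $\mathbb{S}^d$, and the index was not shifted when they were transcribed to $\mathbb{S}^{d-1}$. The correct statement has constant $\frac{d-1}{p-2}$. Its range is $p\in[1,2)\cup(2,\frac{2(d-1)}{d-3}]$ for $d\ge4$, and all $p\ge1$ with $p\neq2$ for $d=2,3$. Your two-step scheme (hypercontractivity plus the concavity check for $p<2$; Euler--Lagrange rigidity via Bochner, or the Dolbeault--Esteban--Loss flow, for $p>2$) is exactly the route of the cited works. It proves this corrected statement once you set $n=d-1$ throughout. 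The correction is harmless for the use in Lemma~\ref{L3.2}, which only needs some finite constant and $p=\frac{2q}{q-2}$ below the critical exponent.

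One smaller inaccuracy holds even on $\mathbb{S}^n$. The pointwise bound $\Gamma_2\ge n\Gamma$ fails: for $f=\omega_1$ at the equator, $\Gamma_2-n\Gamma=-1$. Only $\Gamma_2\ge(n-1)\Gamma+\frac1n(\Delta f)^2$ holds, so the sharp log-Sobolev constant $n$ comes from the curvature-dimension condition $CD(n-1,n)$, not from the plain Bakry--\'Emery criterion.
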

 
 We also recall Sobolev-type embedding theorems. 

\begin{lemma}\cite{H} \label{sb}
Let $(M,g)$ be a smooth, compact Riemannian $n$-manifold.
\begin{enumerate}
\item  For $q\geq 1$ and $\lambda\in (0,1)$, if $q\geq \frac{n}{1-\lambda}$, then $W^{1,q}(M) \subset C^\lambda(M)$. 
\item For $q\in [1,n)$ and $p\geq1$ such that $\frac1p > \frac1q - \frac1n$, the embedding of $W^{1,q}(M)$ in $L^p(M)$ is compact. In particular, the embedding of $H^1(M)$in $L^2(M)$ is compact.
\end{enumerate}

\end{lemma}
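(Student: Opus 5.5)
The statement to prove is the last one displayed, namely Lemma \ref{sb} (Sobolev embeddings on a compact Riemannian manifold). Since it is quoted from \cite{H}, the plan is to reduce both items to the Euclidean case via the finite atlas $\{(U_i,\varphi_i)\}$ and partition of unity $\{\eta_i\}$ introduced above. Given $u\in W^{1,q}(M)$, we write $u=\sum_i \eta_i u$ and set $u_i:=(\eta_i u)\circ\varphi_i^{-1}$. Each $u_i$ is compactly supported in $V_i\subseteq\bbr^n$. Because $M$ is compact, the chart maps may be chosen so that the metric coefficients $g_i$ and their inverses are uniformly bounded on the supports. Then $J_i$ is bounded above and below, and $\|u_i\|_{W^{1,q}(\bbr^n)}\le C\|u\|_{W^{1,q}(M)}$; the extra term $u\nabla\eta_i$ is controlled by $\|u\|_{L^q}$. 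Conversely, norms and H\"older seminorms on $M$ are controlled by the sum over $i$ of the corresponding Euclidean quantities of $u_i$. For the H\"older part we use that the Riemannian distance is comparable to the Euclidean distance in each chart, and that two points not lying in a common chart are at distance bounded below.

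For item (1), we apply Morrey's inequality on $\bbr^n$. For $q>n$ and compactly supported $v\in W^{1,q}$, it gives $\|v\|_{C^{0,1-n/q}}\le C\|v\|_{W^{1,q}}$. The hypothesis $q\ge n/(1-\lambda)$ is equivalent to $1-n/q\ge\lambda$, and on a bounded support $C^{0,1-n/q}\subset C^{0,\lambda}$. Transporting each $u_i$ back with the smooth $\varphi_i$ and summing the finitely many pieces yields $u\in C^\lambda(M)$ with a norm bound.

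For item (2), we write $q^*=nq/(n-q)$, so the condition $\frac1p>\frac1q-\frac1n$ says $p<q^*$. We take a bounded sequence in $W^{1,q}(M)$. Each localized sequence $(u_i^k)_k$ is bounded in $W^{1,q}_0(B)$ for a fixed ball $B$. Rellich--Kondrachov on the bounded Lipschitz domain $B$ then gives a subsequence converging in $L^p(B)$ for every $p<q^*$. For $p<1$ this is meaningless, but the hypothesis $p\ge1$ excludes that case. Extracting successively over the finitely many charts and using the comparability of $L^p$ norms gives convergence in $L^p(M)$. The statement about $H^1\subset L^2$ compactly requires $q=2<n$ and $2<2^*$; when $n\le2$ we instead use $H^1\subset W^{1,r}$ for $r<2$ close to $2$, or simply Rellich on bounded domains in any dimension, which applies since $L^2$ is below the critical exponent.

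The main technical point is the uniform comparability of the chart quantities. This means the equivalence of the coordinate $W^{1,q}$ and H\"older norms with the intrinsic ones, in particular handling the H\"older quotient across different charts. We would treat it with a Lebesgue-number argument for the cover. Everything else is the standard Euclidean Morrey and Rellich--Kondrachov theorems.
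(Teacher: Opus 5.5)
Your proposal is correct and is essentially the argument behind the paper's proof. The paper simply defers to Theorems 2.8 and 2.9(i) of Hebey's book, and those results are proved by this same finite-atlas/partition-of-unity reduction to the Euclidean Morrey and Rellich--Kondrachov theorems. You are also right that the ``in particular'' clause is not literally covered by item (2) when $n\le 2$, which matters here since $n=d-1$; of your two fixes, only the direct Rellich argument on balls works for $n=1$, because there is no $r\in[1,n)$ in that case.
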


\begin{proof}
For the proofs, we refer the reader to \cite[Theorem 2.8]{H} and \cite[Theorem 2.9(i)]{H}, respectively.
\end{proof}

Note that    it directly follows from Sobolev embedding theorem for compact manifolds that for all $u\in \mathcal U$, 
\begin{equation} \label{control2}
\|u\|_{L^\infty( 0,T;L^\infty(\bbsd))} \leq C_{Sobolev} M =:\tilde M.
\end{equation}

Finally, we recall the Aubin-Lions lemma.

\begin{lemma}
For $1\leq p,q\leq \infty$,  
define
\[
W := \{ u\in L^p (0,T; H^1(\bbsd)): \partial_t u \in L^q(0,T; H^{-1}(\bbsd))\}.
\]
Then, if $p<\infty$, then the embedding of $W$ into $L^p(0,T; L^2(\bbsd))$ is compact. 
\end{lemma}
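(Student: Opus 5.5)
The plan is to deduce the statement from the abstract Aubin--Lions--Simon framework, applied to the Gelfand triple $H^1(\bbsd)\subset L^2(\bbsd)\subset H^{-1}(\bbsd)$. The two structural inputs are:
\begin{itemize}
\item the compactness of $H^1(\bbsd)\hookrightarrow L^2(\bbsd)$, which is the last assertion of Lemma \ref{sb}(2);
\item the continuity of $L^2(\bbsd)\hookrightarrow H^{-1}(\bbsd)$, which follows from the spectral definition of $H^s(\bbsd)$, since $(1+\lambda_\ell)^{-1}\le 1$.
\end{itemize}

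First I will prove an Ehrling-type inequality: for every $\eta>0$ there is $C_\eta$ with
\[
\|u\|_{L^2(\bbsd)}\le \eta\|u\|_{H^1(\bbsd)}+C_\eta\|u\|_{H^{-1}(\bbsd)},\quad u\in H^1(\bbsd).
\]
On the sphere this is explicit. Split $u=\sum a_{\ell,k}Y_{\ell,k}$ into low modes $\ell\le L$ and high modes $\ell>L$. The high modes satisfy $\|u_{>L}\|_{L^2}\le (1+\lambda_L)^{-1/2}\|u\|_{H^1}$. The low modes satisfy $\|u_{\le L}\|_{L^2}\le (1+\lambda_L)^{1/2}\|u\|_{H^{-1}}$. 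Choosing $L$ with $(1+\lambda_L)^{-1/2}\le\eta$ gives the inequality, and this spectral proof avoids a contradiction argument.

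Next, take a bounded set $B\subset W$, so that $\|u\|_{L^p(0,T;H^1)}+\|\partial_tu\|_{L^q(0,T;H^{-1})}\le K$ for $u\in B$. I will verify the Kolmogorov--Riesz--Simon criterion in $L^p(0,T;L^2)$ with $p<\infty$.

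\textbf{Spatial compactness.} For $0<t_1<t_2<T$, the averages $\int_{t_1}^{t_2}u\,dt$ are bounded in $H^1$, hence relatively compact in $L^2$ by Lemma \ref{sb}.

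\textbf{Time translates.} Writing $u(t+h)-u(t)=\int_t^{t+h}\partial_tu$ gives
\[
\|u(\cdot+h)-u\|_{L^1(0,T-h;H^{-1})}\le hK'
\]
(for $q\ge1$, using H\"older in time). Then I apply the Ehrling inequality pointwise in time and integrate in $L^p$:
\[
\|\tau_hu-u\|_{L^p(L^2)}\le 2\eta K+C_\eta\|\tau_hu-u\|_{L^p(H^{-1})}.
\]

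The main obstacle is the last factor, because I only control $\|\tau_hu-u\|_{L^1(H^{-1})}$ rather than its $L^p$ norm.

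\textbf{Closing the estimate.} I will interpolate: since $u\in L^p(H^1)\subset L^p(H^{-1})$ is bounded, H\"older gives
\[
\|v\|_{L^p(H^{-1})}\le\|v\|_{L^1(H^{-1})}^{\theta}\|v\|_{L^\infty(H^{-1})}^{1-\theta}.
\]
Here the $L^\infty(0,T;H^{-1})$ bound comes from $u\in W^{1,1}(0,T;H^{-1})$, using $u(t)=u(s)+\int_s^t\partial_tu$ and averaging in $s$. Sending first $h\to0$ and then $\eta\to0$ yields uniform equicontinuity of translates in $L^p(0,T;L^2)$.

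Combined with spatial compactness, Simon's theorem gives relative compactness of $B$. Finally, I will check that limits remain in the target space, which is immediate since $L^p(0,T;L^2)$ is complete.
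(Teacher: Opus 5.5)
Your argument is correct. It uses the same two ingredients as the paper: the compact embedding $H^1(\bbsd)\hookrightarrow L^2(\bbsd)$ from Lemma \ref{sb} and the continuous embedding $L^2(\bbsd)\hookrightarrow H^{-1}(\bbsd)$. The difference is that the paper stops there and cites the abstract Aubin--Lions lemma, while you re-derive that lemma in this concrete setting. Your derivation has three parts: a spectral Ehrling inequality; Simon's characterization of relatively compact subsets of $L^p(0,T;B)$ (compact time averages plus uniformly small time translates); and an $L^1$--$L^\infty$ interpolation that upgrades the $O(h)$ bound on $\|\tau_hu-u\|_{L^1(0,T-h;H^{-1})}$ to an $L^p$ bound.

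The paper's route is a one-line reduction. Yours shows where $p<\infty$ is used (in Simon's criterion and in $\theta=1/p>0$) and why every $q\ge 1$ suffices. This includes the endpoint cases $p=1$ and $q=1$, which the reflexivity-based proof of Aubin and Lions does not cover and which are due to Simon. Your Ehrling inequality also has an explicit constant from spherical harmonics rather than a contradiction argument. The price is length. The argument is also not fully self-contained, since Simon's compactness criterion is the genuinely nontrivial part of the abstract theorem.

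One small simplification: you do not need the averaging argument that bounds $u$ in $L^\infty(0,T;H^{-1})$. For a.e.\ $t$ you already have $\|u(t+h)-u(t)\|_{H^{-1}}\le\int_t^{t+h}\|\partial_t u\|_{H^{-1}}\,ds\le\|\partial_tu\|_{L^1(0,T;H^{-1})}$. That uniform bound on the increments is all your interpolation step uses.
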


\begin{proof}
It follows from Lemma \ref{sb} that $H^1(\bbsd)$ is compactly embedded in $L^2(\bbsd)$ from Lemma \ref{sb} and straightforward calculation shows that $L^2(\bbsd)$ is continuously embedded in $H^{-1}(\bbsd)$. Then, the classical Aubin-Lions lemma gives the desired result. 
\end{proof}

\section{Optimal control problems for the Liouville equation and mean-field PDE} \label{sec:3}
 \setcounter{equation}{0}

In this section, we present the proofs of three assertions in Theorem \ref{T1.1} in the following three subsections, respectively.

\begin{proposition}
Suppose that the initial data $f_0\geq0$ satisfies $f_0 \in H^1(\bbsd)$. Then, for a given $T>0$, \eqref{A-0} has a unique weak solution $f$ in $C(0,T;H^1(\bbsd))$. 
\end{proposition}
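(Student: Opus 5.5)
We prove well-posedness of \eqref{A-0} by a linearization/fixed-point argument combined with $L^2$ and $H^1$ energy estimates on $\bbsd$, assuming $\sigma>0$ (parabolic regularization is needed for $H^1$ control). Throughout, the controls $\Psi,\kappa\in\mathcal U$ are fixed. By \eqref{control2} they are bounded by $\tilde M$, and their spatial gradients are in $L^\infty(0,T;L^q(\bbsd))$ with $q$ large. This is the regime where Lemma \ref{sb}(1) gives Hölder continuity, and we will need $q\ge d-1$ (or larger) to control products $\nabla\kappa\cdot f$ in $L^2$.

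\textbf{Step 1 (linear problem).} Fix $g\in C([0,T];L^1(\bbsd))$ with $\|g(t)\|_{L^1}\le 1$, and set $u_g:=\Psi\omega+\kappa L[g]$. Since $|L[g]|\le \|g\|_{L^1}$ and $\nabla_\omega L[g]$ is bounded by a constant times $\|g\|_{L^1}$ (the kernel $\mathbb P(\omega)\omega_*$ is smooth in $\omega$), the field $u_g$ is bounded. Its divergence is $\nabla_\omega\kappa\cdot L[g]+\kappa\nabla_\omega\cdot L[g]+\nabla_\omega\cdot(\Psi\omega)$, which lies in $L^\infty(0,T;L^q)$. We solve the linear equation $\partial_t f+\nabla_\omega\cdot(u_g f)=\sigma\Delta_\omega f$ by a Galerkin scheme in the spherical harmonics $\{Y_{\ell,k}\}$. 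Testing with $f$ gives
\[
\frac12\frac{d}{dt}\|f\|_{L^2}^2+\sigma\|\nabla_\omega f\|_{L^2}^2=\int u_g f\cdot\nabla_\omega f\le \frac\sigma2\|\nabla_\omega f\|_{L^2}^2+C\|f\|_{L^2}^2 .
\]
Testing with $-\Delta_\omega f$ gives
\[
\frac12\frac{d}{dt}\|\nabla_\omega f\|^2+\sigma\|\Delta_\omega f\|^2=\int \nabla_\omega\cdot(u_gf)\,\Delta_\omega f .
\]
Here $\nabla_\omega\cdot(u_gf)=u_g\cdot\nabla_\omega f+(\nabla_\omega\cdot u_g)f$. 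The first term is handled by boundedness of $u_g$. For the second we use Hölder, $\|(\nabla\cdot u_g)f\|_{L^2}\le \|\nabla\cdot u_g\|_{L^q}\|f\|_{L^{p}}$ with $\frac1p=\frac12-\frac1q$, and then Lemma \ref{GN} (Sobolev on the sphere) to bound $\|f\|_{L^p}\lesssim\|f\|_{H^1}$. Gronwall then yields $f\in L^\infty(0,T;H^1)\cap L^2(0,T;H^2)$ with $\partial_tf\in L^2(0,T;L^2)$, hence $f\in C([0,T];H^1)$ by the standard interpolation lemma. The Aubin--Lions lemma passes the Galerkin limit. Nonnegativity and mass conservation ($\int f\,d\omega=\int f_0$) follow by testing with $f^-$ and with $1$, so $\|f(t)\|_{L^1}=\|f_0\|_{L^1}$ and the solution map preserves the set of admissible $g$.

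\textbf{Step 2 (contraction).} Define $\mathcal T g:=f$. For two inputs $g_1,g_2$, the difference $w=f_1-f_2$ solves
\[
\partial_tw+\nabla_\omega\cdot(u_{g_1}w)+\nabla_\omega\cdot(\kappa L[g_1-g_2]f_2)=\sigma\Delta_\omega w .
\]
An $L^2$ energy estimate, using $|L[g_1-g_2]|\le\|g_1-g_2\|_{L^1}\le\|g_1-g_2\|_{L^2}$ together with the uniform $L^2$ bound on $f_2$, gives
\[
\sup_{[0,T_*]}\|w\|_{L^2}^2\le CT_*\sup_{[0,T_*]}\|g_1-g_2\|_{L^2}^2 .
\]
So $\mathcal T$ is a contraction on $C([0,T_*];L^2)$ for small $T_*$. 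Because the $H^1$ bounds of Step 1 depend only on $\|g\|_{L^1}$, which is conserved, $T_*$ is uniform and we can iterate up to any $T$. Uniqueness of weak solutions follows from the same difference estimate with $g_i=f_i$, closed by Gronwall.

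\textbf{Main obstacle.} The hard step is the $H^1$ estimate with only $W^{1,q}$ controls. The term $(\nabla_\omega\kappa\cdot L[g])f$ must be controlled in $L^2$ without any derivatives on $\nabla\kappa$, so the argument relies on choosing $q$ large enough that $L^q\cdot H^1\subset L^2$ on $\bbsd$; Lemma \ref{sb} and the range of $p$ allowed in Lemma \ref{GN} make this work. A related subtlety is that $f$ only belongs to $C([0,T];H^1)$ (not $C^1$ in time), so the energy identities must be derived at the Galerkin level before passing to the limit.
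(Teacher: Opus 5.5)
Your proof is correct, and it is more complete than the paper's, whose proof is only a pointer to the method of [FL, Theorem 2.4]; the paper never carries out the adaptation to the $\omega$-dependent controls $\Psi,\kappa$.

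\emph{What you do differently.} You freeze the nonlocal field in $u_g=\Psi\omega+\kappa L[g]$ and solve the resulting linear problem by a spherical-harmonic Galerkin scheme. Testing with $-\Delta_\omega f_n$ is legitimate there, since $\Delta_\omega$ preserves the span of harmonics of degree $\le n$. You then close with a contraction in $C([0,T_*];L^2(\bbsd))$ via $|L[g_1-g_2]|\le\|g_1-g_2\|_{L^1}$. This is self-contained, and it isolates the one genuinely new term, $(\nabla_\omega\cdot u_g)f$, where $\nabla_\omega\Psi$ and $\nabla_\omega\kappa$ lie only in $L^q$. You handle it with the same H\"older--Sobolev device ($L^q\cdot L^{2q/(q-2)}\subset L^2$ plus the Sobolev embedding on $\bbsd$) that the paper itself uses for $\mathcal I_{21}$ and $\mathcal I_{23}$ in Lemma \ref{L3.2}.

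\emph{What this buys.} Your version makes explicit two hypotheses the paper leaves implicit. The first is $\sigma>0$: the dissipation $\sigma\|\Delta_\omega f\|_{L^2}^2$ is what absorbs $\int(\nabla_\omega\cdot u_g)f\,\Delta_\omega f$, and for $\sigma=0$ one would need a derivative of $\nabla_\omega\cdot u_g$, which is unavailable for $W^{1,q}$ controls. The second is that $q$ must be large enough, e.g.\ $q>\max\{2,d-1\}$, which also yields \eqref{control2}.

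\emph{Minor adjustments.}
\begin{enumerate}
\item The proposition does not normalize $f_0$, so use the invariant set $\{g:\ \|g(t)\|_{L^1}\le m\}$ with $m=\|f_0\|_{L^1}$. This set is closed in $C([0,T_*];L^2(\bbsd))$, so the fixed-point space is complete.
\item The uniformity of $T_*$ also depends on $\sup_{[0,T]}\|f(t)\|_{L^2}$, not only on the conserved mass. Your $L^2$ estimate controls this by $e^{CT}\|f_0\|_{L^2}$ with $C=C(m,\tilde M,\sigma)$, so the iteration goes through.
\item Since $L[g]=\bbp(\omega)J_g$ with $J_g=\int_{\bbsd}\omega_*\,g\,d\omega_*\in\bbr^d$, you could run the fixed point on $J\in C([0,T];\bbr^d)$ instead, which shortens Step 2.
\end{enumerate}
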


\begin{proof}
The proof follows from the similar method developed in \cite[Theorem 2.4]{FL}. 
\end{proof}

\subsection{Proof of Theorem \ref{T1.1}(1)} Let $(\Psi^j,\kappa^j)_{j\in \bbn} \in \mathcal U^2$ be a minimizing sequence of the given minimization problem 
\[
\lim_{j\to\infty} \tilde{\mathcal J}(f^N[\Psi^j,\kappa^j], \Psi^j, \kappa^j) = \inf_{\Psi,\kappa \in \mathcal U} \tilde{\mathcal J} ( f^N[\Psi,\kappa],\Psi,\kappa).
\]
Since the sequence $(\Psi^j,\kappa^j)_{j\in \bbn}$ is uniformly bounded in $ L^\infty(0,T;W^{1,q}(\bbsd))$, there is a weakly-$*$ convergent subsequence (up to relabeling) with a weak limit $(\Psi^*,\kappa^*) \in \mathcal U^2$. Our goal is to show that $(\Psi^*,\kappa^*)$ indeed minimizes the cost functional $\tilde{\mathcal J}$, i.e.,
\[
\lim_{j\to\infty} \tilde{\mathcal J}(f^N[\Psi^j,\kappa^j], \Psi^j, \kappa^j) \geq \tilde{\mathcal J}(f^N[\Psi^*,\kappa^*], \Psi^*, \kappa^*). 
\]
Define the $n$-th marginal of $f^N[\Psi,\kappa]$ for $n=1,2,3$:
\[
\begin{aligned}
&f^{N;1}[\Psi,\kappa](t,\omega) := \int_{(\bbsd)^{N-1}} f^N[\Psi,\kappa](t,\omega,\omega_2,\cdots,\omega_N) d\omega_2\cdots d\omega_N, \\
&f^{N;2}[\Psi,\kappa](t,\omega,\tilde\omega) := \int_{(\bbsd)^{N-2}} f^N[\Psi,\kappa](t,\omega,\tilde \omega,\omega_3,\cdots,\omega_N) d\omega_3\cdots d\omega_N, \\
&f^{N;3}[\Psi,\kappa](t,\omega,\tilde\omega,\hat\omega) := \int_{(\bbsd)^{N-3}} f^N[\Psi,\kappa](t,\omega,\tilde \omega,\hat\omega, \omega_4,\cdots,\omega_N) d\omega_4\cdots d\omega_N.
\end{aligned}
\]
Since our proof crucially relies on the compactness argument, we will show that 
\begin{align} \label{goal}
\begin{aligned}
&\textup{(i) $f^{N;2}[\Psi^j, \kappa^j]$ is uniformly bounded in $L^2(0,T; H^1((\bbsd)^2))$.} \\
&\textup{(ii) $f^{N;1}[\Psi^j, \kappa^j]$ is uniformly bounded in $L^\infty(0,T; H^1(\bbsd))$.}
\end{aligned}
\end{align}
These two statements in \eqref{goal} are shown in Lemma \ref{L3.1} and Lemma \ref{L3.2} below. 
Then, by the Aubin-Lions lemma, we obtain a strongly convergent subsequence (up to relabeling) 
\[
f^{N;1}[\Psi^j, \kappa^j] \to f^{N;1} \quad \textup{in $C(0,T;L^2(\bbsd))$}.
\]
In addition, by taking limits term by term in the weak formulation of the Liouville equation and using the uniqueness of a solution to the Liouville equation, we have
\[
f^{N;1} = f^{N;1}[\Psi^*,\kappa^*].
\]
Finally, we estimate $\tilde{\mathcal J}(f^N[\Psi^j,\kappa^j],\Psi^j,\kappa^j) - \tilde{\mathcal J}(f^N[\Psi^*,\kappa^*],\Psi^*,\kappa^*)$ to show that 
\[
\lim_{j\to\infty} \tilde{\mathcal J}(f^N[\Psi^j,\kappa^j],\Psi^j,\kappa^j) - \tilde{\mathcal J}(f^N[\Psi^*,\kappa^*],\Psi^*,\kappa^*)  \geq 0.
\]
 
Since we have the exchangeability of the particle system, by straightforward calculation, the governing dynamics of the  first marginal $f^{N;1}[\Psi,\kappa]$ is derived as
\begin{align} \label{fN1}
\begin{aligned}
&\p_t f^{N;1}[\Psi,\kappa] - \sigma \Delta_\omega f^{N;1}[\Psi,\kappa] + \nabla_\omega \cdot \Big( \Psi(t,\omega) \omega f^{N;1}[\Psi,\kappa]\Big)  \\
&\quad + \frac{N-1}{N} \nabla_\omega \cdot \left( \kappa(t,\omega) \int_{\bbsd} f^{N;2}[\Psi,\kappa](\omega,\tilde\omega) ( \tilde \omega           - \langle \omega,\tilde \omega\rangle \omega ) d\tilde\omega \right) =0. 
\end{aligned}
\end{align}
Similarly, the second marginal $f^{N;2}[\Psi,\kappa]$ satisfies 
\begin{align} \label{fN2}
\begin{aligned}
&\p_t f^{N;2}[\Psi,\kappa] -\sigma\Big( \Delta_\omega f^{N;2}[\Psi,\kappa] + \Delta_{\tilde\omega} f^{N;2}[\Psi,\kappa]\Big)  \\
&\quad + \nabla_\omega\cdot \Big( \Psi(t,\omega) \omega f^{N;2}[\Psi,\kappa]\Big) + \nabla_{\tilde\omega} \cdot \Big( \Psi(t,\tilde\omega)\tilde\omega f^{N;2}[\Psi,\kappa] \Big)  \\
& \quad+ \frac{N-2}{N} \nabla_\omega \cdot \left( \kappa(t,\omega) \int_{\bbsd} f^{N;3}[\Psi,\kappa](\omega,\tilde\omega,\hat\omega) (\hat\omega - \langle \omega,\hat\omega\rangle \omega) d\hat\omega \right) \\
& \quad + \frac1N \nabla_\omega \cdot \Big( \kappa(t,\omega) (\tilde \omega - \langle \omega,\tilde\omega \rangle \omega) f^{N;2}[\Psi,\kappa] \Big) \\
& \quad +\frac{N-2}{N} \nabla_{\tilde \omega}  \cdot \left( \kappa(t,\tilde \omega) \int_{\bbsd} f^{N;3}[\Psi,\kappa](\omega,\tilde\omega,\hat\omega) (\hat\omega - \langle \tilde \omega,\hat\omega\rangle \tilde \omega) d\hat\omega \right) \\
&\quad + \frac1N \nabla_{\tilde \omega} \cdot \Big( \kappa(t,\tilde \omega) (  \omega - \langle \tilde \omega, \omega \rangle \tilde \omega f^{N;2}[\Psi,\kappa]\Big) =0.
\end{aligned}
\end{align}

\begin{lemma} \label{L3.1} 
 $f^{N;2}[\Psi^j, \kappa^j]$ is uniformly bounded in $L^2(0,T; H^1(\bbsd)^2))$. 
\end{lemma}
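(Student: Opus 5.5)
Fix $N$ and write $f^{N;2}_j := f^{N;2}[\Psi^j,\kappa^j]$, $f^{N;3}_j := f^{N;3}[\Psi^j,\kappa^j]$. The plan is a standard $L^2$ energy estimate on \eqref{fN2}, tested against $f^{N;2}_j$ itself. The only structural inputs are the uniform $L^\infty$ control bound \eqref{control2}, $|\Psi^j|,|\kappa^j|\le \tilde M$, the fact that all vector fields involved are bounded on the compact sphere, and the initial regularity $f_0^{\otimes 2}\in L^2((\bbsd)^2)$, which holds since $f_0\in H^1(\bbsd)$. Multiplying \eqref{fN2} by $f^{N;2}_j$, integrating over $(\bbsd)^2$ and integrating the divergence terms by parts gives
\[
\frac12\frac{d}{dt}\|f^{N;2}_j\|_{L^2}^2 + \sigma\bigl(\|\nabla_\omega f^{N;2}_j\|_{L^2}^2 + \|\nabla_{\tilde\omega} f^{N;2}_j\|_{L^2}^2\bigr) = \sum (\text{drift terms}).
\]
Each drift term has the form $\int V\, g\cdot \nabla f^{N;2}_j$, where $V$ is one of $\Psi^j(t,\omega)\omega$, $\kappa^j(\tilde\omega-\langle\omega,\tilde\omega\rangle\omega)/N$, or $\kappa^j$ times the $\hat\omega$-integral, and $g$ is either $f^{N;2}_j$ or $f^{N;3}_j$.

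For the local terms, where $g=f^{N;2}_j$ and $|V|\le 2\tilde M$, Young's inequality gives a bound of the form $\frac{\sigma}{4}\|\nabla f^{N;2}_j\|^2 + C\tilde M^2\sigma^{-1}\|f^{N;2}_j\|^2$. Integration by parts is harmless here, since $\nabla_\omega\cdot(\Psi\omega)$ is never needed explicitly: the derivative stays on $f^{N;2}_j$, so only $\|\Psi^j\|_{L^\infty}$ enters and not its $W^{1,q}$ norm.

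The main obstacle is the hierarchy term containing $f^{N;3}_j$. It is
\[
\frac{N-2}{N}\int \kappa^j(t,\omega) \Bigl(\int f^{N;3}_j(\omega,\tilde\omega,\hat\omega)(\hat\omega-\langle\omega,\hat\omega\rangle\omega)\,d\hat\omega\Bigr)\cdot\nabla_\omega f^{N;2}_j \,d\omega d\tilde\omega,
\]
together with its analogue in $\tilde\omega$. Bounding the inner integral in $L^2_{\omega,\tilde\omega}$ by $\|f^{N;3}_j\|_{L^2}$ via Cauchy--Schwarz in $\hat\omega$ requires control of the third marginal, so the estimate does not close on its own. I would close the hierarchy in one of two ways.

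The first option is to bound $\|f^{N;3}_j(t)\|_{L^2((\bbsd)^3)}$ directly. Since $N$ is fixed, it suffices to derive an $L^2$ bound on the full Liouville solution $f^N_j$ by the same energy estimate applied to \eqref{A-3}. There all drifts are bounded by $C\tilde M$, uniformly in $j$ though with $N$-dependent constants, so Gr\"onwall gives $\sup_t\|f^N_j(t)\|_{L^2((\bbsd)^N)}\le e^{C(N,\tilde M,\sigma)T}\|f_0\|_{L^2}^N$. Jensen's inequality, or Cauchy--Schwarz on the marginal integral over the unit-measure product, then yields $\|f^{N;3}_j(t)\|_{L^2}\le\|f^N_j(t)\|_{L^2}$, and likewise for $f^{N;2}_j$. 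This is acceptable because the uniformity required in Lemma~\ref{L3.1} is only in $j$.

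The second option is to apply the energy estimate to $f^N_j$ directly. This already gives $\nabla f^N_j\in L^2(0,T;L^2)$ uniformly in $j$. Since the gradient of a marginal is the marginal of the gradient, Jensen's inequality gives $\|\nabla_{\omega} f^{N;2}_j\|_{L^2}\le \|\nabla_{\omega_1} f^N_j\|_{L^2}$, and the lemma would follow immediately.

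I will adopt the second route as the cleanest. After absorbing the gradient terms, the estimate reads
\[
\frac{d}{dt}\|f^N_j\|^2+\sigma\sum_i\|\nabla_{\omega_i}f^N_j\|^2\le C(N)\tilde M^2\sigma^{-1}\|f^N_j\|^2,
\]
and the remaining steps are Gr\"onwall's inequality, integration in time, and the marginal inequality above. Writing \eqref{fN2} would then only serve to exhibit the structure.
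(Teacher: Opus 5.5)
Your argument is correct for the lemma as stated, but it takes a different route from the paper, and it starts from a misdiagnosis. The estimate on \eqref{fN2} does close on its own, because $f^{N,j;3}\ge 0$ and $|\hat\omega-\langle\omega,\hat\omega\rangle\omega|\le 1$. These give, pointwise,
\[
\Bigl|\int_{\bbsd} f^{N,j;3}(\omega,\tilde\omega,\hat\omega)\,(\hat\omega-\langle\omega,\hat\omega\rangle\omega)\,d\hat\omega\Bigr|\le \int_{\bbsd} f^{N,j;3}(\omega,\tilde\omega,\hat\omega)\,d\hat\omega = f^{N,j;2}(\omega,\tilde\omega).
\]
The paper writes the constant as $2$. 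With this bound, the hierarchy term is controlled by a multiple of $\tilde M\|f^{N,j;2}\|_{L^2}\|\nabla_\omega f^{N,j;2}\|_{L^2}$, exactly like the local terms. No $L^2$ bound on the third marginal is needed, so Cauchy--Schwarz in $\hat\omega$ was the wrong tool there.

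Your chosen route is the energy estimate on the full Liouville equation \eqref{A-3}, followed by Jensen on the marginal. This is valid: all drifts are bounded by $C\tilde M$, the derivative stays on $f^N_j$, and on the unit-mass product space $\|f^{N;2}_j\|_{L^2}\le\|f^N_j\|_{L^2}$ and $\|\nabla_\omega f^{N;2}_j\|_{L^2}\le\|\nabla_{\omega_1}f^N_j\|_{L^2}$. It is simpler in that it needs neither the hierarchy nor positivity. The cost is that the constant is of order $e^{CN\tilde M^2T/\sigma}\|f_0\|_{L^2}^{2N}$. This is inherent to the $N$-particle $L^2$ norm, since $\|f_0\|_{L^2}\ge\|f_0\|_{L^1}=1$ with equality only for the uniform density.

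That cost is harmless for Theorem~\ref{T1.1}(1), where $N$ is fixed and uniformity is needed only in $j$. The paper's two-particle estimate, however, gives constants independent of $N$: the prefactors $\frac{N-2}{N}$ and $\frac1N$ are at most $1$, and the initial norm is $\|f_0\|_{L^2}^2$. The paper relies on this later. The gradient bound \eqref{C-10} feeds into Lemma~\ref{L3.2}, and the resulting $L^\infty(0,T;H^1)$ bound on $f^{N;1}$ is then invoked uniformly in $N$ in Lemma~\ref{L4.2}. With your version, those uniform-in-$N$ claims would not follow, so you would still need the positivity argument there.
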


\begin{proof}

For $T>0$, our goal is to show that there exists a constant $C>0$ such that 
\[ 
\|f^{N,j;2}(t)\|_{L^2((\bbsd)^2)}^2 \leq C,\quad t\in (0,T].
\]
For simplicity, we write $f^{N,j ; k}:=f^{N;k}[\Psi^j,\kappa^j]$ for $1 \le k \le N$. Then, we multiply $f^{N;j;2}$ with the resulting equation \eqref{fN2} and integrate by parts to find
\begin{align*}
&\frac12\frac{d}{dt}  \int_{\bbsd\times\bbsd} (f^{N,j;2})^2 d\omega d\tilde \omega \\
&\quad   + \sigma\int_{\bbsd\times\bbsd} |\nabla_\omega f^{N,j;2}|^2 d\omega d\tilde \omega + \sigma\int_{\bbsd\times\bbsd}  |\nabla_{\tilde\omega} f^{N,j;2}   |d\tilde\omega  d\omega  \\
& = \int_{\bbsd\times\bbsd} \Psi^j(t,\omega)\omega \cdot \nabla_\omega f^{N,j;2} f^{N,j;2} d\omega d\tilde\omega \\
&\quad  + \int_{\bbsd\times\bbsd} \Psi^j(t,\tilde\omega) \tilde \omega \cdot \nabla_{\tilde\omega} f^{N,j;2} f^{N,j;2} d\omega d\tilde\omega \\
&\quad + \frac{N-2}{N} \int_{\bbsd\times\bbsd} \kappa^j(t,\omega) \left( \int_\bbsd f^{N,j;3}(\omega,\tilde\omega,\hat\omega) (\hat\omega - \langle \omega,\hat\omega\rangle\omega) d\hat\omega \right) \cdot \nabla_\omega f^{N,j;2} d\omega d\tilde\omega \\
&\quad + \frac1N \int_{\bbsd\times\bbsd} \kappa^j(t,\omega) (\tilde \omega- \langle \omega, \tilde\omega\rangle \omega) f^{N,j;2} \cdot \nabla_\omega f^{N,j;2} d\omega d\tilde\omega \\
&\quad +\frac{N-2}{N} \int_{\bbsd\times\bbsd} \kappa^j(t,\tilde \omega) \left( \int_\bbsd f^{N,j;3}(\omega,\tilde\omega,\hat\omega) (\hat\omega - \langle \tilde \omega,\hat\omega\rangle\tilde \omega) d\hat\omega \right) \cdot \nabla_{\tilde\omega} f^{N,j;2} d\tilde \omega d  \omega \\
&\quad + \frac1N \int_{\bbsd\times\bbsd} \kappa^j(t,\omega) (\omega - \tilde \omega,\omega\rangle\tilde\omega) f^{N,j;2} \cdot \nabla_{\tilde\omega} f^{N,j;2}d\tilde\omega d\omega \\
& =: \mathcal I_{11} + \cdots + \mathcal I_{16}.
\end{align*}
By integrating the equation above on $[0,t]$ for any $t\in (0,T]$, we have
\begin{align*}
&\frac12  \int (f^{N,j;2}(t))^2 d\omega d\tilde \omega  - \frac12  \int (f^{N,j;2}(0))^2 d\omega d\tilde \omega  + \sigma\int_0^t \int |\nabla_{(\omega,\tilde\omega)} f^{N,j;2}|^2 d\omega d\tilde\omega ds \\
& = \int_0^t \mathcal I_{11} ds + \cdots + \int_0^t \mathcal I_{16} ds.
\end{align*} 
Below, we estimate $\int_0^t \mathcal I_{1k} ds$ for $k=1,\cdots,6$, respectively. \newline

\noindent $\bullet$ (Estimates for $\int_0^t \mathcal I_{11}ds$): We use H\"older's inequality to find
\begin{align*}
&\int_0^t \mathcal I_{11}ds   = \int_0^t \int_{\bbsd\times\bbsd} \Psi^j(t,\tilde\omega) \tilde \omega \cdot \nabla_{\tilde\omega} f^{N,j;2} f^{N,j;2} d\omega d\tilde\omega ds \\
&\hspace{0.5cm} \leq \|\Psi^j\|_{L^\infty(0,T; L^\infty(\bbsd))} \|f^{N,j;2}\|_{L^2(0,T;L^2((\bbsd)^2))} \|\nabla_\omega f^{N,j;2}\|_{L^2(0,T;L^2((\bbsd)^2))}.
\end{align*}
\noindent $\bullet$ (Estimates for $\int_0^t\mathcal I_{12}ds$): Similar to the first case, 
\begin{align*}
&\int_0^t \mathcal I_{12} ds   = \int_0^t \int_{\bbsd\times\bbsd} \Psi^j(t,\tilde\omega) \tilde \omega \cdot \nabla_{\tilde\omega} f^{N,j;2} f^{N,j;2} d\omega d\tilde\omega ds \\
& \hspace{0.5cm} \leq \|\Psi^j\|_{L^\infty(0,T; L^\infty(\bbsd))} \|f^{N,j;2}\|_{L^2(0,T;L^2((\bbsd)^2))} \|\nabla_{\tilde\omega} f^{N,j;2}\|_{L^2(0,T;L^2((\bbsd)^2))}.
\end{align*}
\noindent $\bullet$ (Estimates for $\int_0^t\mathcal I_{13}ds$): We observe
\begin{align*}
& \left| \int_\bbsd f^{N,j;3}(\omega,\tilde\omega,\hat\omega) (\hat\omega - \langle \omega,\hat\omega\rangle\omega) d\hat\omega \right|    \leq 2 \int_{\bbsd} f^{N,j;3}(\omega,\tilde\omega,\hat\omega) d\hat \omega = 2 f^{N,j;2}.
\end{align*}
Thus, we get
\begin{align*}
&\int_0^t \mathcal I_{13} ds  \\
& = \frac{N-2}{N}\int_0^t  \int_{\bbsd\times\bbsd} \kappa^j(t,\omega) \left( \int_\bbsd f^{N,j;3}(\omega,\tilde\omega,\hat\omega) (\hat\omega - \langle \omega,\hat\omega\rangle\omega) d\hat\omega \right) \cdot \nabla_\omega f^{N,j;2} d\omega d\tilde\omega ds  \\
& \leq \frac{2(N-2)}{N}  \|\kappa^j \|_{L^\infty(0,T;L^\infty(\bbsd))} \|f^{N,j;2}\|_{L^2(0,T;L^2((\bbsd)^2))} \|\nabla_{\omega} f^{N,j;2}\|_{L^2(0,T;L^2((\bbsd)^2))}.
\end{align*}
\noindent $\bullet$ (Estimates for $\int_0^t\mathcal I_{14}ds$): We see
\begin{align*}
\int_0^t \mathcal I_{14} ds & =\frac1N \int_{\bbsd\times\bbsd} \kappa^j(t,\omega) (\tilde \omega- \langle \omega, \tilde\omega\rangle \omega) f^{N,j;2} \cdot \nabla_\omega f^{N,j;2} d\omega d\tilde\omega  \\
& \leq  \frac2N \|\kappa^j \|_{L^\infty(0,T;L^\infty(\bbsd))} \|f^{N,j;2}\|_{L^2(0,T;L^2((\bbsd)^2))} \|\nabla_{\omega} f^{N,j;2}\|_{L^2(0,T;L^2((\bbsd)^2))}.
\end{align*}
\noindent $\bullet$ (Estimates for $\int_0^t\mathcal I_{15}ds$): Similar to the third case, 
\begin{align*}
\int_0^t\mathcal I_{15}ds \leq \frac{2(N-2)}{N}  \|\kappa^j \|_{L^\infty(0,T;L^\infty(\bbsd))} \|f^{N,j;2}\|_{L^2(0,T;L^2((\bbsd)^2))} \|\nabla_{\tilde\omega} f^{N,j;2}\|_{L^2(0,T;L^2((\bbsd)^2))}.
\end{align*}
\noindent $\bullet$ (Estimates for $\int_0^t\mathcal I_{16}ds$): Similar to the fourth case, 
\begin{align*}
\int_0^t\mathcal I_{16}ds &=\frac1N \int_{\bbsd\times\bbsd} \kappa^j(t,\omega) (\omega - \tilde \omega,\omega\rangle\tilde\omega) f^{N,j;2} \cdot \nabla_{\tilde\omega} f^{N,j;2}d\tilde\omega d\omega \\
& \leq \frac2N \|\kappa^j \|_{L^\infty(0,T;L^\infty(\bbsd))} \|f^{N,j;2}\|_{L^2(0,T;L^2((\bbsd)^2))} \|\nabla_{\tilde\omega} f^{N,j;2}\|_{L^2(0,T;L^2((\bbsd)^2))}.                
\end{align*}
To this end,  we collect all estimates to find 
\begin{align} \label{C-9}
\begin{aligned}
&\frac12  \int_{\bbsd\times \bbsd} (f^{N,j;2}(t))^2 d\omega d\tilde \omega  - \frac12  \int_{\bbsd\times \bbsd} (f^{N,j;2}(0))^2 d\omega d\tilde \omega  \\
& \quad + \sigma\int_0^t \int_{\bbsd\times \bbsd} |\nabla_{(\omega,\tilde\omega)} f^{N,j;2}|^2 d\omega d\tilde\omega \\
&\quad \leq 2\tilde M \|f^{N,j;2}\|_{L^2(0,T;L^2((\bbsd)^2))} \|\nabla_{\omega} f^{N,j;2}\|_{L^2(0,T;L^2((\bbsd)^2))} \\ 
&\quad \quad+2\tilde M \|f^{N,j;2}\|_{L^2(0,T;L^2((\bbsd)^2))} \|\nabla_{\tilde\omega} f^{N,j;2}\|_{L^2(0,T;L^2((\bbsd)^2))} \\
& \quad \leq \frac{4\tilde M }{\sigma} \|f^{N,j;2}\|_{L^2(0,T;L^2((\bbsd)^2))}^2 + \frac \sigma2 \|\nabla_{(\omega,\tilde\omega)} f^{N,j;2}\|_{L^2(0,T;L^2((\bbsd)^2))}^2.
\end{aligned}
\end{align}
Finally, Gr\"onwall's inequality gives the desired constant $C>0$ such that
\[
\|f^{N,j;2}(t)\|_{L^2((\bbsd)^2)}^2 \leq C,\quad t\in (0,T].
\]
In addition for $h=h(\omega,\tilde\omega)$, 
\begin{align*}
|\nabla_{(\omega,\tilde\omega)} h|^2 = |\nabla_{\omega}h|^2 + |\nabla_{\tilde \omega}h|^2,
\end{align*}
we also obtain  from \eqref{C-9} that 
\begin{equation} \label{C-10}
 \|\nabla_{\omega} f^{N,j;2}\|_{L^2(0,T;L^2((\bbsd)^2))}^2 +  \|\nabla_{\tilde \omega} f^{N,j;2}\|_{L^2(0,T;L^2((\bbsd)^2))}^2 \leq C.
\end{equation}
This gives the desired uniform boundedness in $L^2(0,T; H^1( (\bbsd)^2))$. 
\end{proof}

Next, we are concerned with the uniform boundedness of $f^{N,j;1}$ in $L^\infty(0,T; H^1(\bbsd))$. 
\begin{lemma} \label{L3.2}
$f^{N;1}[\Psi^j,\kappa^j]$ is uniformly bounded in $L^\infty(0,T; H^1(\bbsd))$.
\end{lemma}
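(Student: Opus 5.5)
We plan an $H^1$ energy estimate on the first-marginal equation \eqref{fN1}, using the bound on $f^{N,j;2}$ from Lemma \ref{L3.1} to handle the nonlocal term. Write $f^{N,j;1}:=f^{N;1}[\Psi^j,\kappa^j]$. First we obtain $L^\infty(0,T;L^2)$ control by testing \eqref{fN1} with $f^{N,j;1}$, exactly as in Lemma \ref{L3.1}. The drift term $\nabla_\omega\cdot(\Psi^j\omega f^{N,j;1})$ is absorbed using $\|\Psi^j\|_{L^\infty}\le\tilde M$ from \eqref{control2} and Young's inequality. For the interaction term we use the pointwise bound
\[
\Bigl|\int_{\bbsd} f^{N,j;2}(\omega,\tilde\omega)(\tilde\omega-\langle\omega,\tilde\omega\rangle\omega)\,d\tilde\omega\Bigr|\le 2f^{N,j;1}(\omega).
\]
Gr\"onwall's inequality then gives $\sup_t\|f^{N,j;1}(t)\|_{L^2}\le C$ together with $\nabla_\omega f^{N,j;1}\in L^2(0,T;L^2)$, uniformly in $j$.

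For the gradient, we test \eqref{fN1} with $-\Delta_\omega f^{N,j;1}$. This gives
\[
\frac12\frac{d}{dt}\|\nabla_\omega f^{N,j;1}\|_{L^2}^2+\sigma\|\Delta_\omega f^{N,j;1}\|_{L^2}^2=\int \nabla_\omega\cdot(\Psi^j\omega f^{N,j;1})\,\Delta_\omega f^{N,j;1}+\frac{N-1}{N}\int \nabla_\omega\cdot(\kappa^j G^j)\,\Delta_\omega f^{N,j;1},
\]
where $G^j(\omega):=\int f^{N,j;2}(\omega,\tilde\omega)\,\bbp(\omega)\tilde\omega\,d\tilde\omega$.

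Expanding the first divergence produces two kinds of terms. One is $(\nabla_\omega\Psi^j)\omega f^{N,j;1}$, with $\nabla\Psi^j\in L^\infty_tL^q$. The other is $\Psi^j$ times $\nabla_\omega f^{N,j;1}$ or $f^{N,j;1}$, with $\Psi^j\in L^\infty$. Hölder's inequality with exponents $q$ and $2q/(q-2)$ handles the first kind. We then interpolate $\|f^{N,j;1}\|_{L^{2q/(q-2)}}$ between $L^2$ and $H^2$ via Lemma \ref{GN}; here we take $q>d-1$, so that $\mathcal U\subset C^\lambda$ by Lemma \ref{sb}. Young's inequality absorbs everything into $\frac{\sigma}{4}\|\Delta_\omega f^{N,j;1}\|^2$, leaving $C(1+\|\nabla_\omega f^{N,j;1}\|_{L^2}^2)$.

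The second term is handled the same way. The needed quantity is $\nabla_\omega G^j$. Differentiating under the integral in $\omega$ produces $\int\nabla_\omega f^{N,j;2}\,d\tilde\omega$, whose $L^2(\bbsd)$ norm is at most $\|\nabla_\omega f^{N,j;2}\|_{L^2((\bbsd)^2)}$. The derivative of the smooth factor $\bbp(\omega)\tilde\omega$ contributes at most $C f^{N,j;1}$. Hence
\[
\|\nabla_\omega\cdot(\kappa^jG^j)\|_{L^2}\le C\bigl(\|\nabla_\omega f^{N,j;2}\|_{L^2((\bbsd)^2)}+\|f^{N,j;1}\|_{L^2}+\|f^{N,j;1}\|_{L^{2q/(q-2)}}\bigr).
\]
The first term on the right lies in $L^2(0,T)$ uniformly in $j$ by \eqref{C-10}. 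After Young's inequality we obtain
\[
\frac{d}{dt}\|\nabla_\omega f^{N,j;1}\|_{L^2}^2\le C\|\nabla_\omega f^{N,j;1}\|_{L^2}^2+C\bigl(1+\|\nabla_\omega f^{N,j;2}(t)\|_{L^2}^2\bigr),
\]
with the last bracket integrable in time. Gr\"onwall's inequality, together with $\|\nabla f_0\|_{L^2}\le C\|F_0\|_{H^{3/2}}$ from the trace remark, yields the uniform $L^\infty(0,T;H^1)$ bound.

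The main obstacle is the step in which $\nabla\Psi^j$ and $\nabla\kappa^j$ are controlled only in $L^q$. This forces the interpolation argument, and it needs $q$ large relative to $d$. Testing with $-\Delta_\omega f$ also presupposes enough regularity of the solution. We would justify this by mollifying or regularizing the control and passing to the limit, since all constants depend only on $M$, $\sigma$, $T$ and $f_0$.
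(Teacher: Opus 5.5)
Your proposal is correct and follows essentially the same route as the paper. The paper also proves an $L^2$ estimate via $|G^j|\le 2f^{N,j;1}$ and Gr\"onwall, then tests \eqref{fN1} with $-\Delta_\omega f^{N,j;1}$, applies H\"older with exponents $q$ and $2q/(q-2)$, uses Lemma \ref{GN} to control $\|f^{N,j;1}\|_{L^{2q/(q-2)}}$ by the $H^1$ norm, bounds $\nabla_\omega\cdot G^j$ through \eqref{C-10}, and closes with Young's inequality and Gr\"onwall. Your bookkeeping is slightly cleaner: you keep $\|\nabla_\omega f^{N,j;2}(t)\|_{L^2}^2$ as a time-integrable forcing term (the paper's pointwise bound on $\nabla_\omega\cdot F$ is really only an $L^2$-in-time bound), and you flag the regularity needed to test with $-\Delta_\omega f^{N,j;1}$. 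Note that the H\"older step also tacitly requires $q>2$, which your condition $q>d-1$ does not give when $d=2$.
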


\begin{proof}
(i) First, we show that $f^{N;1}[\Psi^j,\kappa^j](t)$ is uniformly bounded in $L^2(\bbsd)$. As we did in Lemma \ref{L3.1},  we again write $f^{N,j;k}:=f^{N;k}[\Psi^j,\kappa^j]$ for $1 \le k \le N$ for simplicity. For this, we multiply $f^{N,j;1}$ with \eqref{fN1} and integrate by parts to find
\begin{align*}
&\frac12\frac{d}{dt} \int_\bbsd (f^{N,j;1})^2 d\omega +\sigma\int_\bbsd |\nabla_\omega f^{N,j;1}|^2 d\omega \\
&=\int_\bbsd \Psi^j \omega \cdot \nabla_\omega f^{N,j;1} f^{N,j;1} d\omega \\
&\quad + \frac{N-1}{N} \kappa^j \left( \int_\bbsd f^{N;2}[\Psi^j, \kappa^j] (\omega,\tilde\omega) (\tilde \omega - \langle \omega,\tilde\omega\rangle \omega d\tilde \omega \right)\cdot \nabla_\omega f^{N,j;1} d\omega .
\end{align*}
By the similar argument in Lemma \ref{L3.1}, we have
\begin{align*}
&\frac12\frac{d}{dt} \int_\bbsd (f^{N,j;1})^2 d\omega +\sigma\int_\bbsd |\nabla_\omega f^{N,j;1}|^2 d\omega \\
&\leq \|\Psi^j\|_{L^\infty(0,T;L^\infty(\bbsd))} \|\nabla_\omega f^{N,j;1} \|_{L^2(\bbsd)} \|f^{N,j;1}\| _{L^2(\bbsd)} \\
&\quad+ \frac{2(N-1)}{N} \|\kappa^j\|_{L^\infty(0,T; L^\infty(\bbsd))} \|\nabla_\omega f^{N,j;1} \|_{L^2(\bbsd)} \|f^{N,j;1}\| _{L^2(\bbsd)} \\
&\leq 3\tilde M\|\nabla_\omega f^{N,j;1} \|_{L^2(\bbsd)} \|f^{N,j;1}\| _{L^2(\bbsd)} \\
&\leq \frac{9\tilde M^2}{2\sigma} \|f^{N,j;1}\| _{L^2(\bbsd)} ^2 + \frac \sigma2 \|\nabla_\omega f^{N,j;1} \|_{L^2(\bbsd)}^2.
\end{align*}
Then, Gr\"onwall's inequality gives the desired constant $C>0$ such that
\begin{equation} \label{C-40}
\| f^{N,j;1}(t)\| _{L^2(\bbsd)} ^2 \leq C,\quad t\in (0,T].
\end{equation}
(ii) Our next goal is to show that $\nabla_\omega f^{N,j;1}(t)$ is uniformly bounded in $L^2(\bbsd)$: 
\[
\|\nabla_\omega f^{N,j;1}(t)\|_{L^2(\bbsd)}^2 \leq C,\quad t\in (0,T].
\]
%For this, we  expand the two divergence terms in \eqref{fN1}. First, 
%\begin{align*}
%\nabla_\omega \cdot  (\Psi^j(t,\omega)\omega f^{N,j;1} )  = \nabla_\omega \cdot (\Psi^j(t,\omega)\omega) f^{N,j;1} + \Psi^j(t,\omega)\omega \cdot \nabla_\omega f^{N,j;1}.
%\end{align*}
%and we have
%\[
%|\nabla_\omega \cdot (\Psi^j(t,\omega)\omega) |\leq \sqrt{d-1}|\nabla_\omega \Psi^j(t,\omega)|.
%\]
%Second, 
%\begin{align*}
% \nabla_\omega \cdot \left( \kappa^j(t,\omega) F(\omega) \right) = \kappa^j(t,\omega) \nabla_\omega \cdot F(\omega) + \nabla_\omega \kappa^j(t,\omega) \cdot F(\omega)
%\end{align*} 
%where
%\[
%F(\omega) := \int_{\bbsd} f^{N,j;2}(\omega,\tilde\omega) ( \tilde \omega           - \langle \omega,\tilde \omega\rangle \omega ) d\tilde\omega.
%\]
%Note that 
%\begin{align*}
%\nabla_\omega \cdot F(\omega) & = \int_{\bbsd}\nabla_\omega f^{N,j;2}(\omega,\tilde\omega) \cdot (\tilde \omega -\langle \omega,\tilde\omega\rangle\omega)d\tilde\omega \\
%&\quad  + \int_\bbsd f^{N,j;2}(\omega,\tilde\omega)  \underbrace{ \nabla_\omega \cdot (\tilde\omega-\langle\omega,\tilde\omega\rangle\omega)}_{= -(d-1)\langle\omega,\tilde\omega\rangle} d\tilde \omega \\
%&\leq 2 \left|\int_\bbsd |\nabla_\omega f^{N,j;2}|^2 d\tilde\omega\right|^\frac12 + (d-1)f^{N;1}.
%\end{align*}
%
We multiply $(-\Delta_\omega f^{N,j;1})$ with \eqref{fN1} and integrate by parts to obtain 
\begin{align*}
&\frac12\frac{d}{dt} \int_\bbsd |\nabla_\omega f^{N,j;1}|^2 d\omega +\sigma\int_\omega |\Delta_\omega f^{N,j;1}|^2 \d\omega \\
& = \int_{\bbsd} \nabla_\omega \cdot (\Psi^j[t,\omega]\omega) f^{N,j;1} (-\Delta_\omega f^{N,j;1}) d\omega \\
&\quad + \int_\bbsd \Psi^j(t,\omega) \omega \cdot \nabla_\omega f^{N,j;1} (-\Delta_\omega f^{N,j;1}) d\omega \\
& \quad + \frac{N-1}{N} \int_\bbsd \nabla_\omega \kappa^j(t,\omega) \cdot \left( \int_\bbsd f^{N,j;2}(\omega,\tilde\omega) (\tilde \omega -\langle \omega,\tilde\omega\rangle\omega) d\tilde\omega     \right) (-\Delta_\omega f^{N,j;1}) d\omega \\
&\quad +\frac{N-1}{N} \int_\bbsd \kappa^j(t,\omega) \nabla_\omega \cdot \left( \int_\bbsd f^{N,j;2}(\omega,\tilde\omega) (\tilde \omega -\langle \omega,\tilde\omega\rangle\omega) d\tilde\omega     \right) (-\Delta_\omega f^{N,j;1}) d\omega  \\
&=: \mathcal I_{21} + \cdots + \mathcal I_{24}.
\end{align*}
By integrating the equation above on $[0,t]$ for any $t\in(0,T]$, we have
\begin{align*}
&\frac12 \int_\bbsd (f^{N,j;1}(t))^2 d\omega - \frac12 \int_\bbsd (f^{N,j;1})(0))^2d\omega + \sigma\int_0^t\int_{\bbsd} |\Delta_\omega f^{N,j;1}|^2 \,d\omega ds  \\
& = \int_0^t \mathcal I_{21} ds + \cdots + \int_0^t \mathcal I_{24}ds.
\end{align*}
Below, we estimate $\int_0^t \mathcal I_{2k} ds$ for $k=1,\cdots,4$, respectively. \newline

\noindent  $\bullet$ (Estimates for $\int_0^t \mathcal I_{21} ds$): First, we observe  
\[
|\nabla_\omega \cdot (\Psi^j(t,\omega)\omega) |\leq \sqrt{d-1}|\nabla_\omega \Psi^j(t,\omega)|.
\]
Thus, we use H\"older's inequality to obtain
\begin{align*}
\int_0^t \mathcal I_{21} ds& =\int_0^t  \int_{\bbsd} \nabla_\omega \cdot (\Psi^j[t,\omega]\omega) f^{N,j;1} (-\Delta_\omega f^{N,j;1}) d\omega ds \\
&\leq \sqrt{d-1}\|\Psi^j\|_{L^\infty(0,T;L^q(\bbsd))} \|f^{N,j;1}\|_{L^2(0,T; L^\frac{2q}{q-2}(\bbsd))} \|\Delta_\omega f^{N,j;1}\|_{L^2(0,T; L^2(\bbsd))}.
\end{align*}

\noindent $\bullet$ (Estimates for $\int_0^t \mathcal I_{22} ds$): We observe
\begin{align*}
\int_0^t \mathcal I_{22} ds& =\int_0^t  \int_\bbsd \Psi^j(t,\omega) \omega \cdot \nabla_\omega f^{N,j;1} (-\Delta_\omega f^{N,j;1}) d\omega ds \\
& \leq \|\Psi^j\|_{L^\infty(0,T;L^\infty(\bbsd))} \|\nabla f^{N,j;1}\|_{L^2(0,T; L^2(\bbsd))} \|\Delta_\omega f^{N,j;1}\|_{L^2(0,T; L^2(\bbsd))}.
\end{align*}

\noindent $\bullet$ (Estimates for $\int_0^t \mathcal I_{23} ds$): Note that
\begin{align*}
\left| \int_\bbsd f^{N,j;2} (\omega,\tilde\omega) (\tilde \omega -\langle \omega,\tilde\omega\rangle\omega) d\tilde \omega \right| \leq 2f^{N,j;1}.
\end{align*}
Thus, we obtain
\begin{align*}
& \int_0^t \mathcal I_{23} ds  \\
& = \frac{N-1}{N}\int_0^t         \int_\bbsd \kappa^j(t,\omega) \nabla_\omega \cdot \left( \int_\bbsd f^{N,j;2}(\omega,\tilde\omega) (\tilde \omega -\langle \omega,\tilde\omega\rangle\omega) d\tilde\omega     \right) (-\Delta_\omega f^{N,j;1}) d\omega ds      \\
&\quad \leq  2\|\nabla_\omega \kappa^j \|_{L^\infty(0,T;L^q(\bbsd))}  \|f^{N,j;1}\|_{L^2(0,T;L^\frac{2q}{q-2}(\bbsd))}  \|\Delta_\omega f^{N,j;1}\|_{L^2(0,T; L^2(\bbsd))}.
\end{align*}

\noindent $\bullet$ (Estimates for $\int_0^t \mathcal I_{24} ds$): Denote
\[
F(\omega) := \int_{\bbsd} f^{N,j;2}(\omega,\tilde\omega) ( \tilde \omega           - \langle \omega,\tilde \omega\rangle \omega ) d\tilde\omega.
\]
Note that 
\begin{align*}
\nabla_\omega \cdot F(\omega) & = \int_{\bbsd}\nabla_\omega f^{N,j;2}(\omega,\tilde\omega) \cdot (\tilde \omega -\langle \omega,\tilde\omega\rangle\omega)d\tilde\omega \\
&\quad  + \int_\bbsd f^{N,j;2}(\omega,\tilde\omega)  \underbrace{ \nabla_\omega \cdot (\tilde\omega-\langle\omega,\tilde\omega\rangle\omega)}_{= -(d-1)\langle\omega,\tilde\omega\rangle} d\tilde \omega \\
&\leq 2 \left|\int_\bbsd |\nabla_\omega f^{N,j;2}|^2 d\tilde\omega\right|^\frac12 + (d-1)f^{N,j;1}.
\end{align*}
Then, we get 
\begin{align*}
\| \nabla_\omega \cdot F(\omega) \|_{L^2(\bbsd)}^2 &= \int_\bbsd |\nabla_\omega \cdot F(\omega)|^2d\omega \\
&\leq 8\int_{\bbsd\times \bbsd} |\nabla_\omega  f^{N,j;2}|^2 d\tilde\omega d\omega + 2(d-1)^2 \int_\bbsd (f^{N,j;1})^2 d\omega  \\
& = 8 \|\nabla_\omega f^{N,j;2} \|_{L^2(\bbsd\times\bbsd)}^2 + 2(d-1)^2 \|f^{N,j;1}\|_{L^2(\bbsd)}^2
\end{align*}
which yields 
\begin{align*}
\| \nabla_\omega \cdot F(\omega) \|_{L^2(\bbsd)} & \leq 2\sqrt2 \|\nabla_\omega f^{N,j;2} \|_{L^2(\bbsd\times\bbsd)} + \sqrt2(d-1) \|f^{N,j;1}|\|_{L^2(\bbsd)} \\
&\leq C + \sqrt{2}(d-1) \|f^{N,j;1}|\|_{L^2(\bbsd)} \\
&\leq C.
\end{align*} 
Here, we used \eqref{C-10} in Lemma \ref{L3.1} and \eqref{C-40}. Hence, we obtain
\begin{align*}
&\int_0^t \mathcal I_{24} ds \\
& =\frac{N-1}{N} \int_0^t  \int_\bbsd \kappa^j(t,\omega) \nabla_\omega \cdot \left( \int_\bbsd f^{N,j;2}(\omega,\tilde\omega) (\tilde \omega -\langle \omega,\tilde\omega\rangle\omega) d\tilde\omega     \right) (-\Delta_\omega f^{N,j;1}) d\omega ds \\
& = \frac{N-1}{N}\int_0^t \int_\bbsd \kappa^j(t,\omega) \nabla_\omega \cdot F(\omega) (-\Delta_\omega f^{N,j;1}) d\omega ds \\
& \leq \|\kappa^j\|_{L^\infty(0,T;L^\infty(\bbsd))} \|\nabla_\omega \cdot F(\omega) \|_{L^2(0,T; L^2(\bbsd))} \|\Delta_\omega f^{N,j;1}\|_{L^2(0,T; L^2(\bbsd))} \\
& \leq C\|\kappa^j\|_{L^\infty(0,T;L^\infty(\bbsd))}\|\Delta_\omega f^{N,j;1}\|_{L^2(0,T; L^2(\bbsd))}.
\end{align*}

So far, we have
\begin{align*}
& \frac12 \|\nabla_\omega f^{N,j;1}(t)\|_{L^2(\bbsd)} - \frac12 \|\nabla_\omega f^{N,j;1}(0)\|_{L^2(\bbsd)} + \sigma\|\Delta_\omega f^{N,j;1} \|_{L^2(0,T;L^2(\bbsd))}^2 \\
& \leq \sqrt{d-1}\|\Psi^j\|_{L^\infty(0,T;L^q(\bbsd))} \|f^{N,j;1}\|_{L^2(0,T; L^\frac{2q}{q-2}(\bbsd))} \|\Delta_\omega f^{N,j;1}\|_{L^2(0,T; L^2(\bbsd))} \\
&\quad + \|\Psi^j\|_{L^\infty(0,T;L^\infty(\bbsd))} \|\nabla f^{N,j;1}\|_{L^2(0,T; L^2(\bbsd))} \|\Delta_\omega f^{N,j;1}\|_{L^2(0,T; L^2(\bbsd))} \\
&\quad + 2\|\nabla_\omega \kappa^j \|_{L^\infty(0,T;L^q(\bbsd))}  \|f^{N,j;1}\|_{L^2(0,T;L^\frac{2q}{q-2}(\bbsd))} \| \|\Delta_\omega f^{N,j;1}\|_{L^2(0,T; L^2(\bbsd))} \\
&\quad + C\|\kappa^j\|_{L^\infty(0,T;L^\infty(\bbsd))}\|\Delta_\omega f^{N,j;1}\|_{L^2(0,T; L^2(\bbsd))} \\
&\leq C \|f^{N,j;1}\|_{L^2(0,T; L^\frac{2q}{q-2}(\bbsd))} \| \Delta_\omega f^{N,j;1}\|_{L^2(0,T; L^2(\bbsd))} \\
& \quad + C\|\nabla f^{N,j;1}\|_{L^2(0,T; L^2(\bbsd))} \|\Delta_\omega f^{N,j;1}\|_{L^2(0,T; L^2(\bbsd))} \\
& \quad + C \| \Delta_\omega f^{N,j;1}\|_{L^2(0,T; L^2(\bbsd))}.
\end{align*}

Now, it suffices to estimate $\|f^{N,j;1}(t) \|_{L^\frac{2q}{q-2}(\bbsd)}^2$. We use Lemma \ref{GN} with $p=\frac{2q}{q-2}$ to see that 
\[
\|f^{N,j;1}(t)\|_{L^\frac{2q}{q-2}(\bbsd)}^2 \leq \|f^{N,j;1}(t) \|_{L^2(\bbsd)}^2 + \frac{p-2}{d} \|\nabla_\omega f^{N,j;1}(t)\|_{L^2(\bbsd)}^2
\]
Here, if $d\geq3$, then we need to assume that $q\geq d$ so that
\[
\frac{2q}{q-2} \leq 2^*=\frac{2d}{d-2}.
\]
Thus, we have
\begin{align} \label{C-60}
\begin{aligned}
\|f^{N,j;1}\|_{L^2(0,T;L^\frac{2q}{q-2}(\bbsd))}^2 &= \int_0^t \|f^{N,j;1}(t)\|_{L^\frac{2q}{q-2}(\bbsd)}^2 ds \\
&  \leq \int_0^t \|f^{N,j;1}(t)\|_{L^2(\bbsd)}^2 ds \\
&\quad + \frac{4}{d(q-2)} \int_0^t \|\nabla_\omega f^{N,j;1}\|_{L^2(\bbsd)}^2 \\
& \leq C( 1+ \| \nabla_\omega f^{N,j;1}\|_{L^2(0,T;L^2(\bbsd))}^2)
\end{aligned}
\end{align}
where we used \eqref{C-40}. To this end, we use H\"older's inequality to obtain 
\begin{align*}
 &\frac12 \|\nabla_\omega f^{N,j;1}(t)\|_{L^2(\bbsd)} - \frac12 \|\nabla_\omega f^{N,j;1}(0)\|_{L^2(\bbsd)} + \sigma\|\Delta_\omega f^{N,j;1} \|_{L^2(0,T;L^2(\bbsd))}^2  \\
 & \leq \frac{C^2}{2\veps} \|f^{N,j;1}\|_{L^2(0,T; L^\frac{2q}{q-2}(\bbsd))}^2 + \frac\veps2 \| \Delta_\omega f^{N,j;1}\|_{L^2(0,T; L^2(\bbsd))} ^2 \\
 &\quad + \frac{C^2}{2\veps} \|\nabla_\omega f^{N,j;1}\|_{L^2(0,T; L^2(\bbsd))} ^2 +  \frac\veps2 \| \Delta_\omega f^{N,j;1}\|_{L^2(0,T; L^2(\bbsd))} ^2  \\
 & \quad + \frac{C^2}{2\veps} +\frac\veps2 \| \Delta_\omega f^{N,j;1}\|_{L^2(0,T; L^2(\bbsd))} ^2 \\
 & =    \frac{C^2}{2\veps} + \frac{C^2}{2\veps} \|f^{N,j;1}\|_{L^2(0,T; L^\frac{2q}{q-2}(\bbsd))}^2  \\
 &\quad + \frac{C^2}{2\veps} \|\nabla_\omega f^{N,j;1}\|_{L^2(0,T; L^2(\bbsd))} ^2 +\frac{3\veps}{2} \| \Delta_\omega f^{N,j;1}\|_{L^2(0,T; L^2(\bbsd))} ^2 \\
 & \leq C + C  \|\nabla_\omega f^{N,j;1}\|_{L^2(0,T; L^2(\bbsd))} ^2 + \frac \sigma2  \| \Delta_\omega f^{N,j;1}\|_{L^2(0,T; L^2(\bbsd))} ^2 
\end{align*} 
by choosing $\veps = \frac \sigma3$ and using \eqref{C-60}. By applying Gr\"onwall's inequality to find
\begin{align*}
\|\nabla_\omega f^{N,j;1}\|_{L^2(\bbsd)} ^2\leq C,\quad t\in (0,T]
\end{align*}
uniformly in $j$. Hence, together with \eqref{C-40}, we conclude that the sequence $(f^{N,j;1})_{j\in\bbn}$ is uniformly bounded in $L^\infty(0,T; H^1(\bbsd))$.
\end{proof}

Now, we are ready to provide the proof of Theorem \ref{T1.1}(1). \newline

\noindent \textit{Proof of Theorem \ref{T1.1}(1):}  In order to show that $(\partial_t  f^{N,j;1})_{j\in \bbn} \in L^2(0,T; H^{-1}(\bbsd))$, we take the inner product \eqref{fN1} with $\phi \in H^1(\bbsd)$ and use the regularity assumptions \eqref{A-1-2} and \eqref{control2} for control functions in  to find
\begin{align*}
\langle \partial_t f^{N,j;1} ,\varphi\rangle & \leq \|\nabla_\omega f^{N,j;1}\|_{L^2} \|\varphi\|_{H^1}  \\
&\quad + (\|\Psi^j(t)\|_{L^\infty(\bbsd)} + \|\kappa^j(t)\|_{L^\infty(\bbsd)} ) \|f^{N,j;1} \|_{L^2(\bbsd)} \|\varphi\|_{H^1(\bbsd)}.
\end{align*} 
We invoke the Aubin-Lions lemma to obtain a strongly convergent subsequence (up to relabeling) 
\[
f^{N,j;1} \to f^{N;1} \quad \textup{in $C(0,T;L^2(\bbsd))$.}
\]
Then, by the weak formulation of \eqref{fN1} whose weak solution is unique, we conclude that
\[
f^{N;1} = f^{N;1}[\Psi^*,\kappa^*]. 
\]
Finally, we show that $(\Psi^*,\kappa^*)$ indeed minimizes the cost functional $\tilde{\mathcal J}$ in \eqref{A-4}:
\[
\lim_{j\to\infty} \tilde{\mathcal J}(f^N[\Psi^j,\kappa^j], \Psi^j, \kappa^j) \geq \tilde{\mathcal J}(f^N[\Psi^*,\kappa^*], \Psi^*, \kappa^*). 
\]
Although the proof directly follows from \cite{CWZ}, we provide it for completeness. We write
\begin{align*}
&J_N(f^N[\Psi^j,\kappa^j], \Psi^j, \kappa^j) - \tilde{\mathcal J}(f^N[\Psi^*,\kappa^*], \Psi^*, \kappa^*) \\
& = \frac{\alpha}{2} \int_0^T \int_\bbsd |f^{N,j;1}(t,\omega) - z(t,\omega)|^2 - |f^{N;1}[\Psi^*,\kappa^*] - z(t,\omega)|^2 d\omega dt \\
&\quad + \frac{\beta}{2}\int_\bbsd |f^{N,j;1}(T,\omega) - z(T,\omega)|^2 - |f^{N;1}[\Psi^*,\kappa^*] - z(T,\omega)|^2 d\omega \\
&\quad + \frac12 \int_0^T \int_\bbsd \lambda_1 ( |\Psi^j(t,\omega)|^2 - |\Psi^*(t,\omega)|^2) + \lambda_2( |\kappa^j(t,\omega)|^2 - |\kappa^*(t,\omega)|^2) d\omega dt  \\
& =: \mathcal I_{31} + \mathcal I_{32} + \mathcal I_{33}. 
\end{align*}
Below, we estimate $\mathcal I_{3k}$ for $k=1,2,3$, respectively. \newline

\noindent $\bullet$ (Estimates for $\mathcal I_{31}$): We have
\begin{align*}
\mathcal I_{31}  & = \frac{\alpha}{2} \int_0^T \int_\bbsd | f^{N,j;1}(t,\omega) - z(t,\omega)|^2 - |f^{N;1}[\Psi^*,\kappa^*] - z(t,\omega)|^2 d\omega dt \\
&\leq \frac{\alpha}{2} \| f^{N,j;1} - f^{N;1}[\Psi^*,\kappa^*]\|_{L^2(0,T; L^2(\bbsd))}  \\
&\quad \times ( \|f^{N,j;1}-z\|_{L^2(0,T;L^2(\bbsd))} + \| f^{N;1}[\Psi^*,\kappa^*] - z\|_{L^2(0,T;L^2(\bbsd))} )
\end{align*} 
Since \eqref{C-40} holds for any $j\in \bbn$ and
\begin{align*}
\|f^{N;1}[\Psi^*,\kappa^*]\|_{L^2(\bbsd)}  \leq \|f^{N;1}[\Psi^*,\kappa^*] - f^{N,j;1}\|_{L^2(\bbsd)} + \|f^{N,j;1}\|_{L^2(\bbsd)}  \leq C,
\end{align*}
we see that 
\[
\lim_{j\to\infty} \mathcal I_{31} =0.
\]
$\bullet$ (Estimates for $\mathcal I_{32}$): Similar to $\mathcal I_{31}$,  we see 
\begin{align*}
\mathcal I_{32} & =  \frac{\beta}{2}\int_\bbsd | f^{N,j;1}(T,\omega) - z(T,\omega)|^2 - f^{N;1}[\Psi^*,\kappa^*] - z(T,\omega)|^2 d\omega \\
&\leq \frac{\beta}{2} \| f^{N,j;1} (T) - f^{N;1}[\Psi^*,\kappa^*](T)\|_{L^2(\bbsd)} \\
&\quad \times ( \| f^{N,j;1}(T)  -z(T) \|_{L^2(\bbsd)}  +  \| f^{N;1}[\Psi^*,\kappa^*](T)  -z(T) \|_{L^2(\bbsd)} ).
\end{align*}
Then, we directly find that
\[
\lim_{j\to\infty} \mathcal I_{32} =0.
\]
$\bullet$ (Estimates for $\mathcal I_{33}$): Note that weak-$*$ convergence of $(\Psi^j,\kappa^j)$ in $L^\infty(0,T; W^{1,q}(\bbsd))$ implies weak convergence in $L^2(0,T;L^2(\bbsd))$ to the same limit. Since $L^2(\bbsd)$ is the Hilbert space, the norm in  $L^2(0,T; L^2(\bbsd))$ is weakly lower semicontinuous. Hence, we have
\begin{align*}
\liminf_{j\to\infty} \mathcal I_{33} &=  \liminf_{j\to\infty} \left( \frac{\lambda_1}{2} ( \|\Psi^j\|_{L^2(0,T;L^2(\bbsd))}^2 - \|\Psi^*\|_{L^2(0,T; L^2(\bbsd))}^2) \right. \\
&\hspace{4cm} +  \left. \frac{\lambda_2}{2} ( \|\kappa^j\|_{L^2(0,T;L^2(\bbsd))}^2 - \|\kappa^*\|_{L^2(0,T; L^2(\bbsd))}^2)\right) \geq0.
\end{align*}
To this end, we finally obtain
\begin{align*}
\lim_{j\to\infty} \left(  \tilde{\mathcal J}(f^N[\Psi^j,\kappa^j], \Psi^j, \kappa^j) - \tilde{\mathcal J}(f^N[\Psi^*,\kappa^*], \Psi^*, \kappa^*) \right) \geq0.
\end{align*} 
This completes the proof.

\subsection{Proof of Theorem \ref{T1.1}(2)}

We proceed to the proof of Theorem \ref{T1.1}(2). \newline

\noindent\textit{Proof of Theorem \ref{T1.1}(2):}  
 It follows from the classical Csisz\'{a}r-Kullback-Pinsker inequality \cite[Chapter 22]{V08} that $L^1$ distance between two functions can be bounded by their relative entropy:
\begin{equation} \label{CKP}
\|f^{N;1}- f \|_{L^\infty(0,T; L^1(\bbsd))}^2 \leq 2\| \mathcal H (f^N | f^{\otimes N}(t)) \|_{L^\infty(0,T)}
\end{equation}
 where the relative entropy is defined as 
\[
\mathcal{H}(f^N \ | f^{\otimes N}) := \frac1N \int_{(\bbs^{d-1})^N} f^N(t, \omega_1, \dots, \omega_N) \log \frac{f^N(t, \omega_1, \dots, \omega_N)}{f^{\otimes N}(t, \omega_1, \dots, \omega_N)}\,d\omega_1 \dots d\omega_N.
\]
One can easily verify that  \eqref{CKP} still holds when the underlying space is given as $\bbsd$. 
Here, we introduce $d\omega_{1:N} := d\omega_1 \dots d\omega_N$ for simplicity. Then, the direct estimates yield
\begin{align*}
\frac{d}{dt}&\mathcal{H}(f^N \ | f^{\otimes N}) \\
&= \frac1N \int_{(\bbsd)^N} \pa_t f^N \log \frac{f^N}{f^{\otimes N}}\,d\omega_{1:N} - \frac1N  \int_{(\bbsd)^N} f^N \frac{\pa_t f^{\otimes N}}{f^{\otimes N}}\,d\omega_{1:N}  \\
&= \frac1N \sum_{i=1}^N \int_{(\bbsd)^N}   \lt(\Psi(t,\omega_i) \omega_i + \frac{\kappa(t,\omega_i)}{N}\sum_{k=1}^N \mathbb{P}(\omega_i) \omega_k \rt) f^N \cdot \nabla_{\omega_i} \log \frac{f^N}{f^{\otimes N}} \,d\omega_{1:N}\\
&\quad - \frac{\sigma}{N}\sum_{i=1}^N  \int_{(\bbsd)^N}  \nabla_{\omega_i} f^N \cdot \nabla_{\omega_i} \log \frac{f^N}{f^{\otimes N}} \,d\omega_{1:N} \\
&\quad - \frac1N \sum_{i=1}^N \int_{(\bbsd)^N}  \lt( \Psi(t,\omega_i)\omega_i  + \kappa(t,\omega_i) L[f](\omega_i) \rt)f^{\otimes N} \cdot \nabla_{\omega_i} \lt(\frac{f^N}{f^{\otimes N}} \rt) \,d\omega_{1:N}\\
&\quad + \frac{\sigma}{N}\sum_{i=1}^N \int_{(\bbsd)^N}  \nabla_{\omega_i} f^{\otimes N} \cdot \nabla_{\omega_i} \lt(\frac{f^N}{f^{\otimes N}} \rt) \,d\omega_{1:N}\\
&= \frac1N \sum_{i=1}^N \int_{(\bbsd)^N}  \kappa(t,\omega_i)\lt(\frac1N \sum_{k=1}^N \bbp(\omega_i)\omega_k - L[f](\omega_i) \rt) f^N \cdot \nabla_{\omega_i} \log \frac{f^N}{f^{\otimes N}} \,d\omega_{1:N}\\
&\quad  -\frac{\sigma}{N} \sum_{i=1}^N  \int_{(\bbsd)^N}  f^N (\nabla_{\omega_i} \log f^N)\cdot \nabla_{\omega_i} \log \frac{f^N}{f^{\otimes N}}\,d\omega_{1:N} \\
&\quad+ \frac{\sigma}{N}\sum_{i=1}^N \int _{(\bbsd)^N}f^N \nabla_{\omega_i} \log f^{\otimes N} \cdot \nabla_{\omega_i} \log \frac{f^N}{f^{\otimes N}}\,d\omega_{1:N}\\
&= \frac1N \sum_{i=1}^N  \int_{(\bbsd)^N}  \kappa(t,\omega_i)\lt(\frac1N \sum_{k=1}^N \bbp(\omega_i)\omega_k - L[f](\omega_i) \rt) f^N \cdot \nabla_{\omega_i} \log \frac{f^N}{f^{\otimes N}}\,d\omega_{1:N}\\
&\quad - \frac{\sigma}{N}\sum_{i=1}^N  \int_{(\bbsd)^N}  f^N \lt|\nabla_{\omega_i}\log \frac{f^N}{f^{\otimes N}} \rt|^2\,d\omega_{1:N}\\
&\le \frac{C}{N}\sum_{i=1}^N  \int_{(\bbsd)^N}  \lt|\frac1N \sum_{k=1}^N \bbp(\omega_i)\omega_k - L[f](\omega_i) \rt|^2 f^N\,d\omega_{1:N}\\
&\quad  - \frac\sigma{2N}\sum_{i=1}^N  \int_{(\bbsd)^N}  f^N \lt|\nabla_{\omega_i}\log \frac{f^N}{f^{\otimes N}} \rt|^2\,d\omega_{1:N},
\end{align*}
where we used Young's inequality. Here, we may write

\[
\frac{C}{N}\sum_{i=1}^N \int \lt|\frac1N \sum_{k=1}^N \bbp(\omega_i)\omega_k - L[f](\omega_i) \rt|^2 f^N = \frac{C}{N}\sum_{i=1}^N \bbe \lt[\lt|\frac1N \sum_{k=1}^N \mathbb{P}(x_i)x_k - L[f](x_i)\rt|^2 \rt],
\]
where the expectation is taken over the law $f^N$ (and also over the law $f^{\otimes N}$). Thus, we can get

\[\begin{aligned}
\frac{1}{N}\sum_{i=1}^N \bbe \lt[\lt|\frac1N \sum_{k=1}^N \mathbb{P}(x_i)x_k - L[f](x_i)\rt|^2 \rt] &\le \frac 3N \sum_{i=1}^N \bbe\lt[\lt| \frac1N \sum_{k=1}^N \bbp(x_i)x_k - \frac1N \sum_{k=1}^N \bbp(\bar{x}_i) \bar{x}_k\rt|^2 \rt]\\
&\quad + \frac3N \sum_{i=1}^N \bbe\lt[ \lt|\frac1N \sum_{k=1}^N \bbp(\bar{x}_i) \bar{x}_k - L[f](\bar{x}_i) \rt|^2 \rt]\\
&\quad + \frac3N \sum_{i=1}^N \bbe\lt[\lt|L[f](\bar{x}_i) - L[f](x_i)\rt|^2 \rt]\\
&=: \mathcal{I}_{41} + \mathcal{I}_{42} + \mathcal{I}_{43}.
\end{aligned}\] 
Below, we estimate $\mathcal I_{4k}$ for $k=1,2,3$, respectively. \newline

\noindent $\bullet$ (Estimates of $\mathcal I_{41}$ and $\mathcal I_{43}$): For $\mathcal{I}_{41}$, we   use the previous estimate, the facts $|\bbp(x)|\le 1$ on $\bbs^{d-1}$  and $|x_i| = |\bar{x}_i| =1$ to yield

\[\begin{aligned}
\mathcal{I}_{41} &\le \frac{6}{N}\sum_{i=1}^N \bbe\lt[\lt| \frac1N \sum_{k=1}^N \bbp(x_i)(x_k-\bar{x}_k) \rt|^2 \rt]  + \frac{6}{N}\sum_{i=1}^N\bbe\lt[\lt| \frac1N \sum_{k=1}^N (\bbp(x_i) - \bbp({\bar x_k}))\bar{x}_k \rt|^2 \rt]\\
&\le \frac CN.
\end{aligned}\]
By a similar argument, $\mathcal I_{43}$ can be estimated as
\[\begin{aligned}
\mathcal{I}_{43} & \le \frac3N \sum_{i=1}^N \bbe\lt[\lt|\int_{\bbs^{d-1}} \lt(\langle x_i , \omega_*\rangle x_i - \langle \bar{x}_i, \omega_* \rangle \bar{x}_i \rt) f(t,\omega_*) d\omega_*\rt|^2 \rt] \le \frac CN,
\end{aligned}\]
where $C$ depends on the $L^1$ norm of $f$. \newline

\noindent $\bullet$ (Estimates for $\mathcal I_{42}$): For $\mathcal{I}_{42}$, we have

\[\begin{aligned}
\mathcal{I}_{42} &= \frac3N \sum_{i=1}^N \bbe \lt[\lt|\bbp(\bar{x}_i) \lt( \frac1N \sum_{k=1}^N \bar{x}_k - \int_{\bbs^{d-1}}\omega_* f\,d\omega_*\rt) \rt|^2 \rt]\\
&\le C \bbe \lt[\lt|\lt( \frac1N \sum_{k=1}^N \bar{x}_k - \int_{\bbs^{d-1}}\omega_* f\,d\omega_*\rt) \rt|^2 \rt].
\end{aligned}\]
The right-hand side is nothing but the variation of the sample mean and hence,
\[
\mathcal{I}_{42} \le C \frac{\mbox{var}(\bar{x})}{N} \le \frac CN.
\]
Thus, we arrive at
\[\begin{aligned}
\frac{d}{dt}\mathcal{H}(f^N | f^{\otimes N}) + \frac{\sigma}{2N}\sum_{i=1}^N \int_{(\bbsd)^N} f^N \lt| \nabla_{\omega_i}\log \frac{f^N}{f^{\otimes N}}\rt|^2 \,d\omega_{1:N} \le \frac CN.
\end{aligned}\]
Finally, direct integration gives the desired result. \qed

\subsection{Proof of Theorem \ref{T1.1}(3)}
We begin with two elementary lemmas. 

\begin{lemma} \label{L4.1} 
Assume that the initial data is a probability density $f_0 \in H^1(\bbsd)$. For a sequence $(\Psi^N, \kappa^N)_{N\in \bbn} \in \mathcal U^2$, there exists a weakly convergent subsequence $(\Psi^{N_k}, \kappa^{N_k}) \rightharpoonup (\bar \Psi,\bar \kappa) \in \mathcal U^2$ such that the corresponding solution $f[\Psi^{N_k},\kappa^{N_k}]$ converges to $f[\bar \Psi,\bar \kappa]$  in the sense that
\[
f[\Psi^{N_k},\kappa^{N_k}] \to f[\bar \Psi,\bar \kappa] \quad \textup{in $C(0,T;L^2(\bbsd))$}.
\]
In addition, there exists a uniform constant $C>0$ not depending on $N\in \bbn$ such that
\[
\|f[\Psi^N, \kappa^N] \|_{C(0,T; H^1(\bbsd))} + \|f [\bar \Psi,\bar \kappa]\|_{C(0,T; H^1(\bbsd))} \leq C,\quad N\in \bbn.
\]
\end{lemma}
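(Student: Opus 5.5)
Since $\mathcal U$ is a closed ball in $L^\infty(0,T;W^{1,q}(\bbsd))$, the dual of the separable space $L^1(0,T;W^{-1,q'}(\bbsd))$, Banach--Alaoglu gives a subsequence $(\Psi^{N_k},\kappa^{N_k})$ converging weakly-$*$ to some $(\bar\Psi,\bar\kappa)\in\mathcal U^2$; weak-$*$ lower semicontinuity of the norm keeps the limit in $\mathcal U$. The remaining work mirrors the proof of Theorem \ref{T1.1}(1), applied now to the mean-field equation \eqref{A-0} instead of the marginal equation \eqref{fN1}. The structure is simpler, since the alignment term $\kappa L[f]f$ closes in $f$ and no second marginal appears. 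We write $f^N:=f[\Psi^N,\kappa^N]$.

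\textbf{Step 1: uniform $H^1$ bounds.} Multiplying \eqref{A-0} by $f^N$ and integrating by parts, we use $|L[f^N]|\le 2\|f^N\|_{L^1}=2$ (mass conservation, $f_0$ a probability density) and \eqref{control2}. Young's inequality and Gr\"onwall's inequality then bound $\|f^N(t)\|_{L^2}$ uniformly in $N$ and bound $\sigma\|\nabla_\omega f^N\|_{L^2(0,T;L^2)}$. Next we multiply by $-\Delta_\omega f^N$ as in Lemma \ref{L3.2}. The terms $\nabla_\omega\cdot(\Psi^N\omega)f^N$ and $\nabla_\omega\kappa^N\cdot L[f^N]f^N$ are controlled by $\|\nabla\Psi^N\|_{L^q}$ and $\|\nabla\kappa^N\|_{L^q}$ times $\|f^N\|_{L^{2q/(q-2)}}$, and the latter norm is handled by Lemma \ref{GN} (with $q\ge d$ when $d\ge3$). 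The term $\kappa^N\nabla_\omega\cdot(L[f^N]f^N)$ splits into $L[f^N]\cdot\nabla f^N$ and $(\nabla_\omega\cdot L[f^N])f^N$, where $|\nabla_\omega\cdot L[f]|\le (d-1)\|f\|_{L^1}$ is explicit. Absorbing $\|\Delta_\omega f^N\|^2$ with $\veps=\sigma/3$ and applying Gr\"onwall gives $\sup_t\|f^N(t)\|_{H^1}\le C(\|f_0\|_{H^1},\tilde M,M,T,\sigma)$. This constant depends only on the admissible bounds, so the same argument yields the bound for $f[\bar\Psi,\bar\kappa]$. The initial $H^1$ norm is finite by the trace remark applied to $F_0\in H^{3/2}$.

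\textbf{Step 2: compactness.} Testing \eqref{A-0} against $\varphi\in H^1$ gives $\|\p_t f^N\|_{H^{-1}}\le C(\|\nabla f^N\|_{L^2}+\|f^N\|_{L^2})$, which is bounded uniformly in $L^\infty(0,T;H^{-1})$. The Aubin--Lions lemma, in its $C([0,T];L^2)$ version (Simon's variant, available since $p=\infty$ and $\p_t$ is bounded in $L^\infty(0,T;H^{-1})$), gives a further subsequence with $f^{N_k}\to g$ in $C(0,T;L^2(\bbsd))$ and $f^{N_k}\rightharpoonup g$ weakly-$*$ in $L^\infty(0,T;H^1)$.

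\textbf{Step 3: identifying the limit.} This is the main obstacle, because the controls converge only weakly-$*$ and enter products with $f^N$. For a test function $\varphi\in C^\infty([0,T]\times\bbsd)$, consider the term $\int\!\!\int \Psi^{N_k}\omega f^{N_k}\cdot\nabla\varphi$. We write it as $\int\!\!\int \Psi^{N_k}\omega\, g\cdot\nabla\varphi$ plus $\int\!\!\int\Psi^{N_k}\omega (f^{N_k}-g)\cdot\nabla\varphi$. The first part converges by weak-$*$ convergence, since $g\,\omega\otimes\nabla\varphi\in L^1(0,T;L^2)\subset L^1(0,T;(W^{1,q})')$. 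The second part is bounded by $\tilde M\|f^{N_k}-g\|_{L^1L^2}\|\nabla\varphi\|_\infty\to0$. The alignment term is handled the same way. Because $L[\cdot]$ is linear with a bounded kernel, $L[f^{N_k}]\to L[g]$ uniformly, so $L[f^{N_k}]f^{N_k}\to L[g]g$ strongly in $L^1L^2$, and this is paired with the weakly-$*$ convergent $\kappa^{N_k}$. The initial data pass to the limit trivially. Thus $g$ is a weak solution of \eqref{A-0} with controls $(\bar\Psi,\bar\kappa)$. The uniqueness part of the preceding Proposition gives $g=f[\bar\Psi,\bar\kappa]$, which completes the proof.
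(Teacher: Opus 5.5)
Your proposal is correct and follows the paper's route. The paper's proof only says that the energy estimates, the Aubin--Lions compactness and the uniqueness argument from Theorem~\ref{T1.1}(1) carry over with constants independent of the control index; you carry this out for the closed mean-field equation \eqref{A-0}, and you also make explicit what the paper leaves implicit (Simon's version of Aubin--Lions for convergence in $C([0,T];L^2(\bbsd))$, and the weak-$*$ times strong passage to the limit in $\Psi^{N_k}\omega f^{N_k}$ and $\kappa^{N_k}L[f^{N_k}]f^{N_k}$), both of which are sound.
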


\begin{proof}
In fact, we notice that all estimates in Theorem \ref{T1.1}(1) do not depend on $j$. Hence, straightforward adaptation of Theorem \ref{T1.1}(1) gives the desired estimates. 
\end{proof}

\begin{lemma} \label{L4.2} 
Assume that the initial data is the probability density $f_0\in H^1(\bbsd)$ and that $(\hat \Psi^N, \hat \kappa^N)_{N\in \bbn}$  is a sequence in $\mathcal U^2$. Then, we have
\[
\lim_{N\to\infty} \left( \tilde{\mathcal J}(f^N[\hat \Psi^N, \hat \kappa^N], \hat \Psi^N, \hat \kappa^N)  -  \mathcal J(f[\hat \Psi^N, \hat \kappa^N], \hat \Psi^N, \hat \kappa^N)  \right) =0.
\]
\end{lemma}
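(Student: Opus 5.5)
The control-penalty terms in $\tilde{\mathcal J}$ and $\mathcal J$ coincide for the same pair $(\hat\Psi^N,\hat\kappa^N)$, so the difference involves only the tracking terms. We write $f^{N;1}_N := f^{N;1}[\hat\Psi^N,\hat\kappa^N]$ and $f_N := f[\hat\Psi^N,\hat\kappa^N]$. Then
\begin{align*}
&\tilde{\mathcal J}(f^N[\hat\Psi^N,\hat\kappa^N],\hat\Psi^N,\hat\kappa^N) - \mathcal J(f_N,\hat\Psi^N,\hat\kappa^N) \\
&\quad = \frac{\alpha}{2}\int_0^T\int_{\bbsd} (f^{N;1}_N - f_N)(f^{N;1}_N + f_N - 2z)\,d\omega dt + \frac{\beta}{2}\int_{\bbsd} (f^{N;1}_N - f_N)(f^{N;1}_N + f_N - 2z)(T)\,d\omega .
\end{align*}
The plan is to bound each of these terms by a product of a vanishing factor and a uniformly bounded factor.

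The vanishing factor comes from Theorem \ref{T1.1}(2), which gives $\|f^{N;1}_N - f_N\|_{L^\infty(0,T;L^1(\bbsd))}\le C/\sqrt N$. The key point to check is that the constant there is uniform over all controls in $\mathcal U^2$, not just over a fixed pair. In that proof the constant depends only on $\tilde M$ from \eqref{control2} (through $|\kappa|\le \tilde M$ in the Young inequality step), on $\sigma$, and on $T$ and the unit-sphere geometry (in $\mathcal I_{41}$–$\mathcal I_{43}$, through the Lipschitz bounds of the regularized coefficients of \eqref{ptcle_reg} and \eqref{MV_reg}). These are controlled by $M$ alone. The mean-field step uses the classical Sznitman-type estimate, whose constant depends on Lipschitz norms of the drift. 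This requires $\Psi,\kappa$ to be Lipschitz in space, which follows from $W^{1,q}\subset C^\lambda$ only in a weaker Hölder form; I would try to verify that the coupling argument still closes with uniform constants, possibly by assuming $q>d-1$ and using a Hölder-type estimate. I expect this uniformity to be the main obstacle.

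The tracking terms pair an $L^1$ difference with a factor that must lie in $L^\infty$, so $L^1$ convergence alone is not enough. I would handle this by interpolation. By $L^2$–$L^1$ interpolation, $\|g\|_{L^2}\le \|g\|_{L^1}^{\theta}\|g\|_{L^p}^{1-\theta}$ for some $p>2$ admissible in Lemma \ref{GN}. Here the $L^p$ norm is bounded through Lemma \ref{GN} by the uniform $H^1$ bounds. For $f_N$ these come from Lemma \ref{L4.1}; for $f^{N;1}_N$ they come from Lemma \ref{L3.2}, which I would re-examine to confirm that its constant does not depend on $N$ (it does not appear to, since only factors like $(N-1)/N\le1$ occur). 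This gives $\|f^{N;1}_N - f_N\|_{L^\infty(0,T;L^2)}\le C N^{-\theta/2}$.

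Finally, Cauchy–Schwarz together with the uniform $L^2$ bounds on $f^{N;1}_N$, $f_N$ and $z$ (we assume $z\in L^2(0,T;L^2)$ with $z(T)\in L^2$) shows that both tracking differences are $O(N^{-\theta/2})$. This tends to $0$ and proves the lemma.
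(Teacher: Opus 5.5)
Your argument is correct, and it closes the tracking terms differently from the paper. The paper pairs the $L^\infty(0,T;L^1(\bbsd))$ error from Theorem \ref{T1.1}(2) directly against $\|f^{N;1}+f-2z\|_{L^1(0,T;L^\infty(\bbsd))}$. For the terminal term it pairs against an $L^\infty$ norm at $t=T$. It asserts both bounds from the uniform $L^\infty(0,T;H^1(\bbsd))$ estimates. You instead upgrade the $L^1$ convergence to $L^\infty(0,T;L^2(\bbsd))$ convergence by interpolating against a uniform $L^p$ bound, $p>2$, from Lemma \ref{GN}, and then apply Cauchy--Schwarz.

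The paper's pairing would give the sharper rate $N^{-1/2}$, but its key bound is not justified as written. $H^1(\bbsd)\not\hookrightarrow L^\infty(\bbsd)$ as soon as $d\ge 3$, since the sphere then has dimension at least $2$. The pairing also silently requires $z\in L^1(0,T;L^\infty)$ and $z(T)\in L^\infty$. Your route needs only $H^1\hookrightarrow L^p$ for some $p>2$, which holds in every dimension, together with $z\in L^2$. The cost is the slower rate $N^{-\theta/2}$ with $\theta=\frac{p-2}{2(p-1)}$, which is irrelevant for the limit. On this step your version is the more robust one.

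On uniformity, you are right that Theorem \ref{T1.1}(2) is stated for a fixed pair of controls. Both your argument and the paper's need its constant to be uniform over $\mathcal U^2$. The paper uses this without comment, so citing it leaves you no worse off than the paper.

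The H\"older patch you suggest would not close, however. With a $\lambda$-H\"older drift, synchronous coupling gives $y'\le Cy^{(1+\lambda)/2}+C/N$, $y(0)=0$, for $y=\bbe|x_i-\bar x_i|^2$. This sublinear inequality does not force $y\to 0$ as $N\to\infty$.

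A cleaner fix bypasses the coupling lemma altogether. Since
\[
\frac1N\sum_k\bbp(\omega_i)\omega_k-L[f](\omega_i)=\bbp(\omega_i)\Big(\frac1N\sum_k\omega_k-\int_{\bbsd}\omega_*f\,d\omega_*\Big),
\]
the integrand in the entropy estimate is bounded by $\Phi:=\big|\frac1N\sum_k\omega_k-\int_{\bbsd}\omega_* f\,d\omega_*\big|^2$. Apply the Gibbs (Donsker--Varadhan) inequality
\[
\int\Phi f^N\,d\omega_{1:N}\le\frac1\eta\,\mathcal H(f^N|f^{\otimes N})+\frac{1}{\eta N}\log\int e^{\eta N\Phi}f^{\otimes N}\,d\omega_{1:N}.
\]
Sub-Gaussian (Hoeffding) concentration for the mean of i.i.d.\ unit vectors bounds the last term by $C(d)/N$ for small $\eta>0$. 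This yields $\frac{d}{dt}\mathcal H\le C\mathcal H+C/N$ with $C$ depending only on $\tilde M$, $\sigma$ and $d$. Hence $\mathcal H\le C_T/N$ uniformly over $\mathcal U^2$, with no Lipschitz regularity of $\Psi,\kappa$ required.
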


\begin{proof}
By the definitions of $\tilde{\mathcal J}$ and $\mathcal J$, 
\begin{align*}
&\left| \tilde{\mathcal J}(f^N[\hat \Psi^N, \hat \kappa^N], \hat \Psi^N, \hat \kappa^N)  -  \mathcal J(f[\hat \Psi^N, \hat \kappa^N], \hat \Psi^N, \hat \kappa^N)  \right| \\
&\leq  \frac{\alpha}{2} \int_0^t \int_\bbsd \Big| (f^{N;1}[\hat \Psi^N, \hat \kappa^N](t,\omega) - z(t,\omega))^2 - ( f[\hat \Psi^N,\hat \kappa^N](t,\omega) - z(t,\omega))^2 \Big| d\omega dt \\
&\quad + \frac{\beta}{2} \int_\bbsd \Big| (f^{N;1}[\hat \Psi^N, \hat \kappa^N](T,\omega) - z(T,\omega))^2 - ( f[\hat \Psi^N,\hat \kappa^N](T,\omega) - z(T,\omega))^2 \Big| d\omega \\
& = :\mathcal I_{51} + \mathcal I_{52}.
\end{align*}
Since we have
\begin{align*}
\| f^{N;1}[\hat \Psi^N,\hat \kappa^N]\|_{L^\infty(0,T; H^1(\bbsd))} \leq C,\quad\|f[\hat \Psi^N,\hat \kappa^N]\|_{L^\infty(0,T; H^1(\bbsd))} \leq C,
\end{align*}
we observe that
\begin{align*}
  \mathcal J_{51}   & \leq  \frac{\alpha}{2}  \| f^{N;1}[\hat \Psi^N,\hat \kappa^N] - f [\hat \Psi^N,\hat \kappa^N]\|_{L^\infty(0,T; L^1(\bbsd))} \\
&\quad \times   \|f^{N;1}[\hat \Psi^N,\hat \kappa^N] + f [\hat \Psi^N,\hat \kappa^N] -2z\|_{L^1(0,T; L^\infty(\bbsd))}  \\
&\leq \frac{C\alpha}{2}  \| f^{N;1}[\hat \Psi^N,\hat \kappa^N] - f [\hat \Psi^N,\hat \kappa^N]\|_{L^\infty(0,T; L^1(\bbsd))} .
\end{align*}
Hence, we have
\[
\lim_{N\to\infty} \mathcal J_{51}=0.
\]
By a similar argument, we also have
\[
\lim_{N\to\infty} \mathcal J_{52}=0.
\]
\end{proof}

Finally, the last assertion of Theorem \ref{T1.1} is provided. \newline

\noindent \textit{Proof of Theorem \ref{T1.1}(3):}  
 It follows from Theorem \ref{T1.1}(1) that for any fixed $N$, we know that there exist minimizers of $\tilde{\mathcal J}(f^N[\Psi, \kappa], \Psi, \kappa)$, denoted by $(\Psi^N, \kappa^N)$, i.e., 
\[
\tilde{\mathcal J}(f^N[\Psi^N,\kappa^N],\Psi^N,\kappa^N) = \min_{(\Psi,\kappa) \in\mathcal U^2} \tilde{\mathcal J}(f^N[\Psi, \kappa], \Psi, \kappa) .
\]
Thanks to Lemma \ref{L4.1}, there exists a weak limit $(\bar \Psi,\bar \kappa)$ of a subsequence $(\Psi^{N_k},\kappa^{N_k})$ of $(\Psi^N,\kappa^N)$ such that
\begin{equation} \label{E-10}
f[\Psi^{N_k},\kappa^{N_k}] \to f[\bar \Psi,\bar \kappa] \quad \textup{in $C(0,T;L^2(\bbsd))$}. 
\end{equation} 
In order to achieve the desired result, we need to show that
\begin{equation} \label{E-11}
\liminf_{k\to\infty} \tilde{\mathcal J} (f^{N_k} [\Psi^{N_k}, \kappa^{N_k}], \Psi^{N_k}, \kappa^{N_k}) \geq  \mathcal J(f[\bar\Psi, \bar\kappa], \bar\Psi, \bar\kappa).
\end{equation}
It suffices to show that
\begin{align*}
&\liminf_{k\to\infty}  \left(  \tilde{\mathcal J} (f^{N_k} [\Psi^{N_k}, \kappa^{N_k}], \Psi^{N_k}, \kappa^{N_k}) - {\mathcal J} (f [\Psi^{N_k}, \kappa^{N_k}], \Psi^{N_k}, \kappa^{N_k}) \right) \\
& \quad + \liminf_{k\to\infty}  {\mathcal J} (f   [\Psi^{N_k}, \kappa^{N_k}], \Psi^{N_k}, \kappa^{N_k}) -  {\mathcal J}([\bar\Psi, \bar\kappa], \bar\Psi, \bar\kappa) \geq 0.
\end{align*}
The first term vanishes due to Lemma \ref{L4.2}. For the second term, 
\begin{align*}
& {\mathcal J} (f   [\Psi^{N_k}, \kappa^{N_k}], \Psi^{N_k}, \kappa^{N_k}) -  \mathcal J(f[\bar\Psi, \bar\kappa], \bar\Psi, \bar\kappa) \\
& = \frac{\alpha}{2} \int_0^T \int_\bbsd \left(  f[\Psi^{N_k},\kappa^{N_k}](t,\omega) - f(\bar \Psi,\bar \kappa)(t,\omega)       \right) \left(  f[\Psi^{N_k},\kappa^{N_k}](t,\omega) +  f(\bar \Psi,\bar \kappa)(t,\omega) -2z(t,\omega)        \right)       d\omega dt  \\
& \quad +\frac{\beta}{2}\int_\bbsd   \left(  f[\Psi^{N_k},\kappa^{N_k}](T,\omega) - f(\bar \Psi,\bar \kappa)(T,\omega)       \right) \left(  f[\Psi^{N_k},\kappa^{N_k}](T,\omega) +  f(\bar \Psi,\bar \kappa)(T,\omega) -2z(T,\omega)        \right)       d\omega                      \\
&\quad + \frac12 \int_0^T \int_\bbsd  \left[ \lambda_1 \Big( (( \Psi^{N_k}(t,\omega))^2 - (\bar \Psi(t,\omega))^2  \Big) +  \lambda_2 \Big( (( \kappa^{N_k}(t,\omega))^2 - (\bar \kappa(t,\omega))^2)  \Big) \right] d\omega dt  \\
& = : \mathcal I_{61} + \mathcal I_{62} +\mathcal I_{63}. 
\end{align*}
For the estimates of  $\mathcal I_{61}$ and $\mathcal I_{62}$,  we observe
\begin{align*}
\mathcal I_{61} & \leq \frac{\alpha}{2} \|  f[\Psi^{N_k},\kappa^{N_k}] - f[\bar \Psi,\bar \kappa]  \|_{L^2(0,T; L^2(\bbsd))} \\
&\quad \times \left(         \|  f[\Psi^{N_k},\kappa^{N_k}] - z  \|_{L^2(0,T; L^2(\bbsd))} +  \|  z- f[\bar \Psi,\bar \kappa]  \|_{L^2(0,T; L^2(\bbsd))}                 \right) 
\end{align*}
and
\begin{align*}
\mathcal I_{62} & \leq \frac{\alpha}{2} \|  f[\Psi^{N_k},\kappa^{N_k}](T) - f[\bar \Psi,\bar \kappa](T)  \|_{L^2(\bbsd)} \\
&\quad \times \left(         \|  f[\Psi^{N_k},\kappa^{N_k}] (T) - z (T)  \|_{ L^2(\bbsd)} +  \|  z(T)- f[\bar \Psi,\bar \kappa](T)  \|_{ L^2(\bbsd)}                 \right).
\end{align*}
Then, we use \eqref{E-10} and \eqref{L4.1} to find
\[
\lim_{k\to\infty} \mathcal I_{61} =0 \quad \textup{and}\quad \lim_{k\to\infty} \mathcal I_{62} =0.
\]
For $\mathcal I_{63}$, similar to the Estimates for $\mathcal I_{33}$, we observe
\begin{align*}
\liminf_{k\to\infty} \mathcal I_{63} & = \liminf_{k\to\infty} \left(     \frac{\lambda_1}{2}  \Big(  \|\Psi^{N_k}\|_{L^2(0,T; L^2(\bbsd))}^2 - \|\bar \Psi\|_{L^2(0,T;L^2(\bbsd))}^2      \Big)    \right. \\
&\quad \left. +\frac{\lambda_2}{2}  \Big(  \|\kappa^{N_k}\|_{L^2(0,T; L^2(\bbsd))}^2 - \|\bar \kappa\|_{L^2(0,T;L^2(\bbsd))}^2      \Big)                    \right)\geq0
\end{align*}
where we used the norm in $L^2(0,T;L^2(\bbsd))$ is weakly lower semicontinuous. This establishes \eqref{E-11}. Finally, for any $(\hat \Psi,\hat \kappa)\in \mathcal U^2$, it follows from similar arguments to Lemma \ref{L4.2} that 
\begin{align*}
\mathcal J ( f[\hat \Psi,\hat \kappa], \hat \Psi,\hat \kappa) &= \lim_{N\to\infty} \tilde{\mathcal J} ( f^N[\hat \Psi,\hat \kappa], \hat \Psi,\hat \kappa) = \lim_{k\to\infty} \tilde{\mathcal J} ( f^{N_k}[\hat \Psi,\hat \kappa], \hat \Psi,\hat \kappa)  \\
&\geq \liminf_{k\to\infty} \min_{(\Psi,\kappa)\in \mathcal U^2} \tilde{\mathcal J} ( f^{N_k}[  \Psi,  \kappa],   \Psi,  \kappa) \\
&\geq \mathcal J( f[\bar \Psi,\bar \kappa], \bar \Psi,\bar \kappa).
\end{align*}
This shows
\[
\mathcal J( f[\bar \Psi,\bar \kappa], \bar \Psi,\bar \kappa) = \min_{(\Psi,\kappa)\in \mathcal U^2} J( f[  \Psi,  \kappa],   \Psi,  \kappa)
\]
and this completes the proof of Theorem \ref{T1.1}.

\end{document}